\documentclass[reqno,11pt,a4paper]{amsart}
\usepackage{graphicx} % Required for inserting images

\usepackage[toc]{appendix}
\usepackage[T1]{fontenc}
\usepackage{enumerate} 
\usepackage{amsmath,amsfonts,amssymb,mathrsfs }
\usepackage[latin1]{inputenc}
\usepackage[english]{babel}
\usepackage{mathtools}
\usepackage{braket}
\usepackage{color}
\usepackage{hyperref}
\usepackage{xfrac, nicefrac}
\usepackage{csquotes}
\usepackage{esint}
\usepackage{todonotes}
\usepackage{orcidlink}

\newtheorem{theorem}{Theorem}[section]
\newtheorem{lemma}[theorem]{Lemma}

\newtheorem{definition}[theorem]{Definition}
\newtheorem{remark}[theorem]{Remark}
\newtheorem*{remark*}{Remark}
\newtheorem*{definition*}{Definition}
\newtheorem*{claim*}{Claim}
\newtheorem*{property*}{Property}

\numberwithin{equation}{section}

\usepackage{hyperref}

\newcommand{\R}{\mathbb{R}}
\newcommand{\N}{\mathbb{N}}

\newcommand{\mint}{\mathop{\int\hskip -1,05em -\, \!\!\!}\nolimits}

\def\A{\mathcal A}

\def\d{\delta}

\def\e{\varepsilon}

\title{Regularity results for elliptic equations on cones}
\author[Antonini]{Carlo Alberto Antonini \orcidlink{0000-0002-7663-1090}}  \address{Carlo Alberto Antonini \\ 
Dipartimento di Matematica ``Federico Enriques'', 
Universit\`a degli studi di Milano, 
Viale Cesare Saldini 50, 20133,
Milan,
Italy\\ ORCID ID: 0000-0002-7663-1090}
\email{\url{carlo.antonini@unimi.it}\\ \url{antonini@altamatematica.it}}

\author{Filomena Pacella}
\address{F. Pacella. Dipartimento di Matematica ``Guido Castelnuovo'', Sapienza Universit\`a di Roma,  P.le Aldo Moro 2, 00185 Roma, Italy}
\email{\url{filomena.pacella@uniroma1.it}}

\author{Camilla Chiara Polvara}
\address{C.C. Polvara. Dipartimento di Matematica ``Guido Castelnuovo'', Sapienza Universit\`a di Roma,  P.le Aldo Moro 2, 00185 Roma, Italy}
\email{\url{camilla.polvara@uniroma1.it}}

\author{Luigi Provenzano}
\address{L. Provenzano.  Dipartimento di Scienze di Base e Applicate per l'Ingegneria ``SBAI'', Sapienza Universit\`a di Roma, Via Antonio Scarpa, 14, 00161 Roma, Italy}
\email{\url{luigi.provenzano@uniroma1.it}}

\date{}

\begin{document}

\begin{abstract}
We study global regularity of solutions to Dirichlet or Neumann elliptic problems in spherical sectors $S_{D,R}$ of radius $R>0$ in $\R^N, N\ge 2$, where $D$ is the bounded domain on the unit sphere $\mathbb{S}^{N-1}$ which spans the spherical sector. One of the main results shows that boundedness of the gradient of the solutions of Poisson equations holds whenever $\lambda_1(D)\ge N-1$, where $\lambda_1(D)$ is the first nontrivial eigenvalue of the Laplace Beltrami operator $-\Delta_{\mathbb{S}^{N-1}}$ on the domain $D$ with Dirichlet or Neumann boundary conditions on $\partial D$.

As an example of Maz'ya shows, the condition on the eigenvalue is sharp.

For general spherical sectors and for $p$-Laplacian equations, $p>1$  we prove weighted global lipschitzianity of the solutions, as well as second order regularity.
 \end{abstract}

\subjclass[2020]{35B65,35D30,  35G15, 35G60, 35J60, 35J62, 35J66, 35J92} 
\keywords{Linear equations, quasilinear equations, elliptic problems,  $p$-Laplacian, gradient regularity, second-order regularity, Dirichlet problems, Neumann problems, spherical sectors and cones}

\maketitle

\section{Introduction}\label{sec:intro}

Let $D\subset \mathbb{S}^{N-1}, N\geq  2$ be a  smooth domain on the unit sphere and consider the cone
\begin{equation}\label{def:cone}
    \Sigma_D:=\big\{x=r\,\theta:\,r>0,\,\theta\in D\big\},
\end{equation}
that we assume to have a Lipschitz continuous boundary.

In this paper we investigate first and second order regularity for solutions of linear and quasilinear elliptic equations on a spherical sector $\Sigma_D\cap B_R,$ where $B_R$ is a ball in $\R^N$, of radius $R>0$ and centered at the origin, for simplicity we take $R=2$.

We begin our analysis with the Poisson equation, considering  both the Neumann boundary value problem
\begin{equation}\label{eq:lapl:neu}
    \begin{cases}
        -\Delta u=f\quad &\text{in $\Sigma_D\cap B_2$}
        \\
        \partial_\nu u=0&\text{on $\partial\Sigma_D\cap B_2$,}
    \end{cases}
\end{equation}
where $\nu$ is the outward unit normal to $\partial \Sigma_D$, and the Dirichlet boundary value problem
\begin{equation}\label{eq:lapl:dir}
    \begin{cases}
        -\Delta u=f\quad &\text{in $\Sigma_D\cap B_2$}
        \\
        u=0&\text{on $\partial\Sigma_D\cap B_2$.}
    \end{cases}
\end{equation}
Due to the presence of the singularity at the vertex of $\Sigma_D$, a first relevant question is to understand whether or not the gradient of $u$ is bounded, at least for a right-hand side $f$ good enough.

For Poisson equations in convex domains $\Omega \subset \mathbb{R}^N$,
the boundedness of the derivatives of the solutions has been obtained
in several papers for the Dirichlet problem 
\cite{Lady53,LU68,M09,M10,M11,M12} and by Maz'ya for the Neumann problem in
\cite{M09}. In our case, this means that if $D$ is a convex domain on
the unit sphere, then $|\nabla u|\in L^\infty(\Sigma_D \cap B_1)$ if $f \in L^q(\Sigma_D \cap B_2)$, $q>N$, for
any solution $u$ of \eqref{eq:lapl:neu}
or \eqref{eq:lapl:dir}.
Thus, the question is to study the boundedness of the gradient when
$D$ is not convex. An interesting counterexample of Maz'ya, cited in
(\cite{M09}, Section 3), shows that there are non-convex domains
$D \subset \mathbb{S}^{N-1}$ for which the Neumann problem,
\eqref{eq:lapl:neu}, admits solutions with unbounded gradient in
$\Sigma_D \cap B_1$, even if
$f \in L^\infty(\Sigma_D \cap B_1)$.

However, looking closely at
Maz'ya example, one can observe that a crucial property of $D$,
which is responsible for the unboundedness of the gradient, is that
the first nontrivial Neumann eigenvalue $\lambda_1(D)$ of the
Laplace--Beltrami operator
$-\Delta_{\mathbb{S}^{N-1}}$ on the domain $D$ is smaller than
$N-1$.

Note that if $D$ is a convex domain on the sphere, then
$\lambda_1(D) \geq N-1$ (\cite{AM, Escobar, M09}).
Thus, the real question is whether the boundedness of the gradient of
the solution can be obtained by assuming
$\lambda_1(D) \geq N-1$, even when $D$ is not convex. This is one of
the main contributions of our paper that we obtain, among other
results, in the theorem that we state below.

Let $\lambda_i$, $Y_i$ denote the Neumann (resp. Dirichlet) ordered eigenvalues and eigenfunctions of
$-\Delta_{\mathbb{S}^{N-1}}$ on $D$, for $i=0,1,\ldots,k,\ldots$ ( $i=1,2,\ldots,k,\ldots$ in the Dirichlet
case); see Section \ref{sec:laplneu} for definitions and notation.

\begin{theorem}\label{thm}
    Let $D\subsetneq\mathbb{S}^{N-1}$ be a domain of class $C^2$, and such that $\Sigma_D$ has Lipschitz continuous boundary. Let  $u\in W^{1,2}(\Sigma_D\cap B_2)$ be a weak solution to the Neumann problem \eqref{eq:lapl:neu}  (resp. the Dirichlet problem \eqref{eq:lapl:dir}).

    Let $q\ge\frac{2N}{N+2}$ ($q>1$ if $N=2$) be such that, upon setting $\eta\equiv \eta_q:=2-\frac{N}{q}$, the quantity $\eta\,(\eta+N-2)$ is not a Neumann (resp. Dirichlet) eigenvalue of the Laplace-Beltrami operator $-\Delta_{\mathbb S^{N-1}}$ on $D$.

  Then, if $f\in L^q(\Sigma_D\cap B_2)$, we have that $u$ admits the finite expansion
    \begin{equation}   \label{decomposition0}     
    u=u_0+\sum_{\substack{i\geq 0\\\xi_i<\eta}}c_i r^{\xi_i}Y_i(\theta),\quad \bigg(\text{resp. }u=u_0+\sum_{\substack{i\geq 1\\\xi_i<\eta}}c_i r^{\xi_i}Y_i(\theta)\bigg)\quad\text{in $\Sigma_D\cap B_1$,}
        \end{equation}

    where $c_i\in\R$, $u_0\in W^{2,q}(\Sigma_D\cap B_1)$ and the exponents $\xi_i$ are given by:
$$
\xi_i:=\frac{-(N-2)+\sqrt{4\lambda_i+(N-2)^2}}{2},
$$ 
and the sum in \eqref{decomposition0} may be empty.
Additionally, we have that
\begin{enumerate}
    \item  if $\lambda_1>N\big(2-\frac{N}{q}\big)\big(1-\frac{1}{q}\big)$, then $u\in W^{2,q}(\Sigma_D\cap B_1)$,
    \item  if $q>N$ then $u_0(0)=0$ and $\nabla u_0(0)=0$,
    \item if $q>N$ and $\lambda_1= N-1$, then $u\in C^{0,1}(\overline\Sigma_D\cap B_1)$,

\item if $q>N$ and $\lambda_1> N-1$, then $u\in C^{1,\alpha}(\overline\Sigma_D\cap B_1)$ and $\nabla u(0)=0$, where $\alpha=\alpha(\lambda_1,q,N)\in (0,1)$.

\end{enumerate}

\end{theorem}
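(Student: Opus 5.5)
We plan to follow the classical Kondrat'ev--Maz'ya approach: separate variables in polar coordinates and reduce to weighted estimates on the infinite cone. First we localize. Take a cutoff $\chi\in C^\infty_c(B_2)$ with $\chi\equiv1$ on $B_{3/2}$ and radial, so that the boundary condition (Neumann or Dirichlet) is preserved, and set $v=\chi u$. Then $-\Delta v=F:=\chi f-2\nabla\chi\cdot\nabla u-u\Delta\chi$ in $\Sigma_D$, with the same boundary condition on $\partial\Sigma_D$. Since $\nabla\chi$ vanishes near the vertex, standard regularity away from the vertex (where $\partial\Sigma_D$ is $C^2$ because $D$ is) combined with a bootstrap gives $F\in L^q(\Sigma_D)$, with compact support in $\overline{\Sigma_D}\cap B_2$. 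Next we expand in the eigenbasis: write $v(r,\theta)=\sum_i v_i(r)Y_i(\theta)$ and $F=\sum_i F_i(r)Y_i(\theta)$. Each coefficient solves the Euler-type ODE
$$-v_i''-\tfrac{N-1}{r}v_i'+\tfrac{\lambda_i}{r^2}v_i=F_i .$$
Its homogeneous solutions are $r^{\xi_i}$ and $r^{-(N-2)-\xi_i}$. The second is excluded by $v\in W^{1,2}$ near $0$; this is clear for $\lambda_i>0$, and for $i=0$ in the Neumann case we use the constant mode.

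We then use the Mellin transform, equivalently the substitution $t=-\log r$, which turns the problem into an operator pencil $\mathcal L(\zeta)=-\Delta_{\mathbb S^{N-1}}-\zeta(\zeta+N-2)$ on $D$. Kondrat'ev theory yields the following: in the weighted space $V^{2,q}_\beta$, with $\beta$ chosen so that $V^{2,q}_\beta\cap\{\text{support near }0\}\subset W^{2,q}$, the relevant critical line is $\operatorname{Re}\zeta=\eta=2-N/q$. The hypothesis that $\eta(\eta+N-2)$ is not an eigenvalue means exactly that the pencil has no eigenvalue on this line. Hence $\mathcal L$ is an isomorphism $V^{2,q}_\beta\to V^{0,q}_\beta$ there. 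The a priori solution $v\in W^{1,2}$ lies in a weaker weighted space, whose line is $\operatorname{Re}\zeta=1-N/2$. Shifting the line from $1-N/2$ up to $\eta$ and applying the residue theorem, $v$ differs from a $V^{2,q}_\beta$ solution by the residues at the eigenvalues $\xi_i$ strictly between the two lines. Since the pencil is selfadjoint and each $\xi_i$ is a simple pole, each residue is exactly $c_ir^{\xi_i}Y_i$, with no logarithmic terms. Poles $\xi<1-N/2$ are excluded by $W^{1,2}$, and the negative branch lies below that line. This gives \eqref{decomposition0}.

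When $q<2$, so that $\eta<1-N/2$ is possible, the argument instead shows that the finite sum is empty and $v\in W^{2,q}$ directly, via the same isomorphism together with uniqueness in the intersection of spaces. The main obstacle is this step: making the weighted $L^q$ theory rigorous on a $C^2$-sectioned cone, including the boundary Neumann/Dirichlet estimates on $D$ uniformly in $\zeta$ on the line. We would obtain these from the parameter-dependent elliptic estimates of Agranovich--Vishik type, or alternatively from a dyadic decomposition $r\in(2^{-k-1},2^{-k})$ with rescaling to a fixed annular sector and local $W^{2,q}$ estimates.

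The consequences follow from exponent comparisons. For (1), the condition $\lambda_1>N\eta(1-1/q)$ together with $\eta+N-2=N(1-1/q)$ gives $\lambda_1>\eta(\eta+N-2)$, so $\xi_1>\eta$. The sum then contains at most the $i=0$ constant term (Neumann), which lies in $W^{2,q}$; in the Dirichlet case the sum is empty. For (2), $q>N$ gives $W^{2,q}\subset C^{1,1-N/q}$ and $\eta>1$. The constraint $u_0\in V^{2,q}_\beta$ with this weight forces $u_0=O(r^{\eta})$, obtained from a weighted Hardy/Sobolev embedding applied on dyadic annuli, and hence $u_0(0)=0$ and $\nabla u_0(0)=0$. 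For (3) and (4), when $\lambda_1\ge N-1$ we have $\xi_1\ge1$, and $\xi_i=1$ exactly when $\lambda_i=N-1$. Each term $r^{\xi}Y_i$ with $1\le\xi<\eta$ is Lipschitz, since $Y_i$ is smooth up to $\partial D$. If $\lambda_1>N-1$, every such term is $C^{1,\xi_1-1}$ with vanishing gradient at $0$. Combining this with (2), and in the Neumann case noting that $c_0$ is constant, gives $u\in C^{0,1}$, respectively $u\in C^{1,\alpha}$ with $\nabla u(0)=0$, where $\alpha=\min\{\xi_1-1,\,1-N/q\}$.
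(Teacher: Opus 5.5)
Your proposal is correct and follows essentially the paper's route: a radial cutoff localization, the substitution $t=\log r$ turning the problem into the pencil $L_D-\zeta(\zeta+N-2)$ on $\R\times D$, isomorphism in weighted spaces off the eigenvalue lines, and shifting the weight from the energy line up to $\eta$ to collect the finitely many terms $r^{\xi_i}Y_i$ (which the paper takes from Dauge's Lemmas 6.6--6.9 and the fourth step of his Theorem 6.4), followed by the same exponent comparisons for (1)--(4). Two small corrections. First, $q\ge \frac{2N}{N+2}$ already forces $\eta\ge 1-\frac N2$, so your case $\eta<1-\frac N2$ never occurs. Second, for $N=2$ in the Neumann case the pole $\zeta=0$ is double and lies on your starting line; you must start from a weight slightly below it and discard the resulting $\log r$ term using $u\in H^1$, exactly as the paper does.
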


\begin{remark}
    \rm{Given $\eta=2-\frac{N}{q}$, then the condition $\eta(\eta+N-2)$ is not a Neumann (resp. Dirichlet) eigenvalue of the Laplacian $-\Delta_{\mathbb S^{N-1}}$ is  satisfied for every $q>N$ whenever the first eigenvalue satisfies $\lambda_1(D)\geq 2N$.  Otherwise, the condition can fail only for finitely many values of $q$. 
    Indeed, if $q>N$, then $\eta(\eta+N-2)<2N$. Hence, if $\lambda_1(D)\geq 2N$, the quantity $\eta(\eta+N-2)$ cannot coincide with any eigenvalue. If instead $\lambda_1(D)< 2N$, the discreteness of the spectrum implies that there are only finitely many eigenvalues strictly below $2N$. Since each eigenvalue can correspond to at most one value of $q$, the condition can fail for at most finitely many values of $q$. }
\end{remark}

The proof of Theorem \ref{thm} makes use of the spectral decomposition obtained by Dauge \cite{dauge_cones}, especially for what concerns the regular part $u_0$ in \eqref{decomposition0}. In Section \ref{sec:laplneu} we show that the singular part of the solutions can be written  explicitly in terms  of eigenvalues and eigenfunctions of the related boundary value problem on the spherical domain $D\subsetneq \mathbb{S}^{N-1}$.

To the best of our knowledge, this explicit relation between the regularity of solutions  on cones and the spectrum of $-\Delta_{\mathbb{S}^{N-1}}$ on  $D$, and in particular the identification of $\lambda_1=N-1$ as the sharp threshold for Lipschitz regularity, has not appeared before.

For related results on spectral decompositions and the asymptotic behavior of solutions to linear boundary-value problems near conical singularities, we refer to the works of Kondrat'ev \cite{Kondratev} and Maz'ya-Plamenevski\u{\i} \cite{MP1,MP2,MP3}, as well as to the monograph by Kozlov-Maz'ya \& Rossmann \cite{KozlovMazya}.

% \begin{remark}\rm{
%     The case $N=2$ is special, since in this case $\partial \Sigma_D\cap B_2$ is made of two segments, and in particular its second fundamental form is zero (away from the vertex). Hence, in this case, the gradient boundedness and second order regularity of solutions is retrieved with the same proof as in the convex case \cite{M09, cia}.}
% \end{remark}
\vspace{0.1cm}

To see why the size of $\lambda_1(D)$ is what really matters in the regularity of solutions near conical singularities, consider the simple case of a $H^1$-harmonic function $u$ in $\Sigma_D\cap B_1$ satisfying Dirichlet or Neumann conditions on the lateral boundary $\partial\Sigma_D\cap B_1$. We briefly describe the Dirichlet case (the discussion does not change for Neumann). We have that $g:=u|_{\Sigma_D\cap \partial B_1}$ is in $H^{1/2}(\Sigma_D\cap \partial B_1)$, and we can write $g=\sum_{i=1}^{\infty}c_i v_i$, where $c_j\in\mathbb R$ and $v_i$ are the (traces of) the eigenfunctions of the Steklov-Dirichlet problem
\begin{equation}\label{SD}
\begin{cases}
\Delta v=0\,, & {\rm in\ }\Sigma_D\cap B_1\,,\\
\partial_\nu v=\xi v\,,  & {\rm on\ }\Sigma_D\cap\partial B_1\,,\\
v=0\,, & {\rm on\ }\partial\Sigma_D\cap B_1.
\end{cases}
\end{equation}
 If $\{\lambda_i(D)\}_{i=1}^{\infty}$ denote the eigenvalues of the Dirichlet Laplacian on $D$ and $\{Y_i(\theta)\}_{i=1}^{\infty}$ a corresponding $L^2(D)$-orthonormal basis of eigenfunctions, then the eigenvalues of \eqref{SD} are given by $\{\xi_i\}_{i=1}^\infty$ with
 $$
\xi_i=\frac{-(N-2)+\sqrt{4\lambda_i(D)+(N-2)^2}}{2}
 $$
 and a corresponding (orthogonal) basis of harmonic functions in $H^1(\Sigma_D\cap B_1)$ is given by $\{v_i\}_{i=1}^{\infty}=\left\{r^{\xi_i}Y_i(\theta)\right\}_{i=1}^{\infty}$. Hence $g=\sum_{i=1}^{\infty}c_iv_i$  simply reads $\sum_{i=1}^{\infty}c_iY_i$. It is a well-known fact (see e.g., \cite{auchmuty_steklov}) that the harmonic function $u$ has the series representation:
\begin{equation}\label{Steklov expansion}
u=\sum_{i=1}^{\infty}c_ir^{\xi_i}Y_i(\theta)=\sum_{i=1}^{\infty}c_iv_i
 \end{equation}
 and that $\sum_{i=1}^{\infty}c_i^2(1+\xi_i)<\infty$ (i.e., the sum converges in $H^1(\Sigma_D\cap B_1)$). The most important observation here is that, from \eqref{Steklov expansion}, one immediately sees that the regularity at the origin ($C^{0,\alpha}$, $C^{k,\alpha}$, $W^{k,p}$, etc.) is driven by the exponent $\xi_1$ (provided the coefficient $c_1\neq 0$), and therefore by the first eigenvalue $\lambda_1(D)$. In particular, we have Lipschitz continuity at the origin if $\xi_1\geq 1$ (i.e., when $\lambda_1(D)\geq N-1$), while the differentiability of solutions is guaranteed  if $\lambda_1(D)>N-1$ (in some very specific cases also when $\lambda_1(D)=N-1$, see Remark \ref{finalrmk}).  In the Neumann case, in the series expansion we have also the constant eigenfunction $v_0={\rm const}$ with eigenvalue $\lambda_0(D)=0$, thus the regularity is again driven by $\lambda_1(D)$ which is the first positive eigenvalue.

The analysis of the regularity of harmonic functions in $H^1(\Sigma_D\cap B_1)$ near the singularity is simpler than the general case. However, in the general case of $-\Delta u=f$, we will see that the regularity of $u$ is recast to that of the harmonic functions in the cone. More precisely, in Section \ref{sec:laplneu}, building on the general results of \cite{dauge_cones}, we will see  that a compactly supported solution of $-\Delta u=f$ with $f\in L^q(\Sigma_D)$ on $\Sigma_D$ (with Dirichlet or Neumann conditions) can be written as the sum of a regular part, which has the same regularity of the smooth case, and a finite number of harmonic functions that behave exactly as the eigenfunctions $v_i$. In other words, the harmonic part of the solution encodes the geometry of the cone through the eigenvalues of $D$, and in turn determines the regularity of the solution.
\vspace{0.2cm}

We now move onto nonlinear operators; more precisely, given $p>1$, we study solutions $u\in W^{1,p}(\Sigma_D\cap B_2)$ to either the Neumann boundary value problem 
\begin{equation}\label{eq:bdryNeu}
        \begin{cases}
        -\Delta_p u=f\quad&\text{in $\Sigma_D\cap B_2$}
        \\
        \partial_\nu u=0 \quad&\text{on $\partial\Sigma_D\cap B_2$}
    \end{cases}
\end{equation}
or the Dirichlet problem
\begin{equation}\label{eq:bdryDir}
    \begin{cases}
        -\Delta_p u=f\quad&\text{in $\Sigma_D\cap B_2$}
        \\
        u=0 \quad&\text{on $\partial\Sigma_D\cap B_2$}
    \end{cases}
\end{equation}
where $\Delta_pu\equiv \mathrm{div}\big(|\nabla u|^{p-2}\nabla u \big)$ is the $p$-Laplace operator.

We shall often use the shorthand notation
\begin{equation}\label{stress:field}
    \A(\xi):=|\xi|^{p-2}\,\xi,\quad\xi\in \R^N,
\end{equation}
so that  $\Delta_p u=\mathrm{div}\big(\A(\nabla u) \big)$. The vector field $\A(\nabla u)$ is typically called \textit{Stress field}. 

We recall that $u$ is a weak solution to \eqref{eq:bdryNeu} (resp \eqref{eq:bdryDir}) if
\begin{equation*}
    \int_{\Sigma_D\cap B_2} \A(\nabla u)\cdot \nabla \varphi\,dx=\int_{\Sigma_D\cap B_2} f\,\varphi\,dx
\end{equation*}
for every $\varphi\in W^{1,p}(\Sigma_D\cap B_2)$ compactly supported in $B_2$ (resp. $\varphi\in W^{1,p}_0( \Sigma_D\cap B_2)$ compactly supported in $B_2$).

A few comments are in order. As already mentioned, the proof of Theorem
\ref{thm} relies heavily on the spectral properties of the Laplace operator
\cite{dauge_cones,KozlovMazya} and therefore fundamentally exploits its
linearity.

%\todo[inline]{CA: ho aggiunto l'articolo di Tolksdorf. Lui tratta Dirichlet per il $p$-Laplaciano+ soluzioni nonnegative. Controllate anche voi questa frase, perch\'e va in parte in contrasto con quanto detto sopra "To the best of our knowledge, this explicit relation between the regularity of solutions  on cones and the spectrum of $-\Delta_{\mathbb{S}^{N-1}}$ on  $D$ (in particular the identification of $\lambda_1=N-1$ as the sharp threshold for Lipschitz regularity) has not appeared before."}

%\todo[inline]{In generale, la parte sotto \'e quasi tutta nuova. Date un'occhiata se vi piace}
For $p$-Laplace type equations, Tolksdorf \cite{T83} obtained related regularity results for nonnegative solutions to Dirichlet problems \eqref{eq:bdryDir} in domains with conical boundary points, under suitable pointwise assumptions on the right-hand side, by means of maximum principles and Hopf-type lemmas.

More recent results, based on suitable integral inequalities
\cite{AC251,ACP25,BDMS22,CMS26,CM11,CM14,cia,DfP23,MMS25,SSV25,SV26}
have shown that global Lipschitz and second-order regularity for solutions to the
nonlinear problems \eqref{eq:bdryNeu}-\eqref{eq:bdryDir} still hold when the
reference domain $\Omega$ is convex.

When convexity is absent, sufficient conditions can instead be formulated in
terms of suitable integrability assumptions on the curvatures of
$\partial\Omega$; see \cite{AC251,CM11,CM14}. Such assumptions, however, are
not generally satisfied by Lipschitz cones, even when the cone is generated by
a smooth spherical sector $D$--see Remark \ref{Remark:derseccono} below.

% In order to obtain global Lipschitz regularity of solution, the convexity assumption  can be replaced by a suitable integrability assumption on the curvatures of $\partial \Omega$; namely $\partial \Omega\in W^{2}L^{N-1,1}$ see \cite{AC251, CM11, CM14}.
% However, any cone $\Sigma_D$ lacks such regularity- see Remark \ref{Remark:derseccono} below.

Nevertheless, by adopting a different approach that exploits the scaling invariance of the cone $\Sigma_D$, we are able to establish weighted global Lipschitz and second-order regularity for solutions in this setting.

% Nevertheless, by using a different approach which takes advantage of the scaling property of the cone $\Sigma_D$, we are able to prove in any case \textit{weighted} global Lipschitz and second order regularity of solutions. 

To this end, given $\Omega\subset \R^N$ open, and a nonnegative measurable function $\rho$ on $\Omega$, we will denote by $L^q(\Omega; \rho dx)$ the set of measurable functions $v:\Omega\to \R^\ell$, $\ell\in \N$, for which the norm
\[
\|v\|_{L^q(\Omega; \rho dx)}:=\bigg(\int_\Omega |v|^q\,\rho dx \bigg)^{1/q}
\]
is finite. When $\rho\equiv 1$, we shall simply write $L^q(\Omega)$.

Our result concerning gradient regularity of solutions to \eqref{eq:bdryNeu}-\eqref{eq:bdryDir} is the following \begin{theorem}\label{thm:nonlingrad}
    Let $D\subsetneq \mathbb{S}^{N-1}$ be a domain of class $C^{1,\beta}$, $\beta\in (0,1)$, and such that $\Sigma_D$ is a Lipschitz cone. Let $u\in W^{1,p}(\Sigma_D\cap B_2)$ be  solution to either the Neumann problem \eqref{eq:bdryNeu} or the Dirichlet problem \eqref{eq:bdryDir}, and assume that $f$ satisfies the integrability condition
\begin{equation}\label{f:integr1}
    f\in L^{\max\{N/p,\,1\}+\delta}(\Sigma_D\cap B_2) \cap L^{N+\d}\big(\Sigma_D\cap B_2;\,|x|^{(p-1)N+\delta p}\,dx\big)
\end{equation}
%     and
% \begin{equation}\label{f:integr2}
%     \|f\|_{L^{N+\d}\big(\Sigma_D\cap B_2;\,|x|^{(p-1)N+\delta p}\,dx\big)}:=\Big(\int_{\Sigma_D\cap B_2} |x|^{(p-1)N+\delta p}\,|f|^{N+\d}\,dx\Big)^{1/(N+\d)}<\infty,
% \end{equation}
 for some small $\d>0$. Then there exists $\alpha\in (0,1)$ such that 
\begin{equation}\label{weight:xnablau}
    \text{the map $x\mapsto |x|\,\nabla u(x)$ belongs to $L^{\infty}(\Sigma_D\cap B_1)$, }
\end{equation}
    and such that
    \begin{equation}\label{weigh:holdnablau}
    \text{ the map $x\mapsto |x|^{1+\alpha}\,\nabla u(x)$ is of class $C^{0,\alpha}(\Sigma_D\cap B_1)$. }
\end{equation}
\end{theorem}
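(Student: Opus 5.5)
The plan is to exploit the scaling invariance of $\Sigma_D$ through a dyadic decomposition combined with rescaling, reducing the weighted global statement to uniform local estimates on a fixed annular piece of the cone. For $k\ge 0$ set $r_k=2^{-k}$ and consider the annular sectors $A_k:=\Sigma_D\cap(B_{2r_k}\setminus \overline B_{r_k/2})$ and $A_k':=\Sigma_D\cap(B_{r_k}\setminus \overline B_{r_k/2})$. Define the rescaled functions
\[
u_k(y):=\frac{u(r_k y)-m_k}{r_k^{\gamma}},\qquad y\in A:=\Sigma_D\cap(B_2\setminus\overline B_{1/2}),
\]
where $m_k$ is the mean of $u$ on $A_k$ in the Neumann case and $m_k=0$ in the Dirichlet case, and $\gamma$ is chosen so that the equation is preserved. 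Since $\nabla u_k(y)=r_k^{1-\gamma}\nabla u(r_ky)$, we get $-\Delta_p u_k=f_k$ with $f_k(y)=r_k^{p-(p-1)\gamma}f(r_ky)$, and the same boundary condition holds on $\partial\Sigma_D\cap A$. The key geometric point is that $A$ is a fixed domain whose lateral boundary $\partial\Sigma_D\cap A$ is of class $C^{1,\beta}$, because it stays away from the vertex and $D\in C^{1,\beta}$. Consequently, the known global $C^{1,\alpha}$ regularity up to the boundary for $p$-Laplace Neumann/Dirichlet problems in $C^{1,\beta}$ domains (Lieberman-type estimates, applied locally near the lateral boundary and away from the spherical caps) gives
\[
\|\nabla u_k\|_{C^{0,\alpha}(A')}\le C\Big(\|u_k\|_{L^p(A)}+\|f_k\|_{L^{N+\delta}(A)}^{1/(p-1)}\Big),
\]
where $A'=\Sigma_D\cap(B_1\setminus\overline B_{1/2})$, and $C,\alpha$ are independent of $k$.

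Choosing $\gamma=0$, the gradient estimate reads $r_k|\nabla u|\lesssim \|u_k\|_{L^p(A)}+\|f_k\|^{1/(p-1)}$ on $A'_k$, with $|x|\sim r_k$ there. Since $\|f_k\|_{L^{N+\delta}(A)}^{N+\delta}=r_k^{p(N+\delta)-N}\int_{A_k}|f|^{N+\delta}dx$, the weight $|x|^{(p-1)N+\delta p}$ in \eqref{f:integr1} is exactly the one making this quantity bounded uniformly in $k$ by $\int_{\Sigma_D\cap B_2}|x|^{(p-1)N+\delta p}|f|^{N+\delta}dx$. It therefore remains to bound $\|u_k\|_{L^p(A)}$, i.e.\ $r_k^{-N/p}\|u-m_k\|_{L^p(A_k)}$, uniformly in $k$. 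This is where the first integrability assumption $f\in L^{\max\{N/p,1\}+\delta}$ enters. By the De Giorgi/Moser theory, which holds in Lipschitz domains for both boundary conditions (Neumann via the extension property of Lipschitz domains), $u$ is bounded and Hölder continuous on $\overline\Sigma_D\cap B_{3/2}$, including at the vertex. In the Dirichlet case, $u\in L^\infty$ immediately gives the bound. In the Neumann case, the oscillation of $u$ on $A_k$ is at most $2\|u\|_\infty$, which is again bounded. This yields \eqref{weight:xnablau}.

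For \eqref{weigh:holdnablau}, the rescaled $C^{0,\alpha}$ bound gives $[\nabla u]_{C^{0,\alpha}(A_k')}\lesssim r_k^{-1-\alpha}$. Combined with $|\nabla u|\lesssim r_k^{-1}$, the product rule applied to $|x|^{1+\alpha}\nabla u$ gives a uniformly bounded Hölder seminorm on each $A'_k$ (the factor $|x|^{1+\alpha}$ has Lipschitz constant $\sim r_k^{\alpha}$ there). For points in non-adjacent shells, with $|x|\ge 2|y|$, we have $|x-y|\gtrsim|x|$, so the trivial bound $\big||x|^{1+\alpha}\nabla u(x)\big|+\big||y|^{1+\alpha}\nabla u(y)\big|\lesssim |x|^{\alpha}\lesssim|x-y|^\alpha$ suffices. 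Points in adjacent shells are handled by using overlapping shells.

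The main obstacle I expect is the uniformity of the boundary regularity constants on $A$ near the corners where the lateral boundary $\partial\Sigma_D$ meets the spheres $\partial B_{1/2},\partial B_2$. I would avoid this by using only interior-in-$r$ estimates, with cutoffs supported away from the spherical caps. A second point to check is that the reference $C^{1,\alpha}$ estimate needs only $f\in L^{N+\delta}$ for Neumann data on $C^{1,\beta}$ boundaries. If needed, I would flatten the boundary with a $C^{1,\beta}$ map, turning the problem into one with Hölder coefficients, and invoke Lieberman's theorem for conormal problems.
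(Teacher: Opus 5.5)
Your proposal is correct and follows essentially the paper's route. Both arguments rescale onto a fixed annulus, apply uniform $C^{1,\alpha}$ boundary estimates away from the vertex, control the rescaled data through the De Giorgi $L^\infty$ bound and the weighted $L^{N+\delta}$ assumption on $f$, and then glue the dyadic shells.

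The differences are minor. The paper controls $\|\nabla v_R\|_{L^p}$ on the annulus through a separate weighted Caccioppoli estimate (Lemma \ref{lemma:energia}), which you absorb into your local estimate in terms of $\|u_k\|_{L^p}$. Your gluing, which treats comparable radii separately from the case $|x|\ge 2|y|$, is a simpler substitute for the paper's chain argument along rays.
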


%Moreover, there exists a constants $C_0=C_0(N,p,\d,\mathrm{diam}(D), \mathcal L_D,\|\partial D\|_{C^{1,1}})>0$  such that
%\begin{equation}\label{stima:quant1}
%\begin{split}
  %  \Big\||x|&\,\nabla u(x)\Big\|_{L^\infty(\Sigma\cap B_1)}+\Big[|x|^{1+\alpha}\nabla u(x)\Big]_{C^{0,\alpha}(\Sigma\cap B_1)}
  %  \\
  %  &\leq C_0\,\bigg\{ \|u\|_{W^{1,p}(\Sigma\cap B_2)}+\Big[\|f\|_{L^{N/p+\delta}(\Sigma\cap B_2)}+\|f\|_{L^{N+\d}\big(\Sigma\cap B_2;\,|x|^{(p-1)N+\delta p}\,dx\big)}\Big]^{\frac{1}{(p-1)}}\bigg\}.
  %  \end{split}
%\end{equation}

For what concerns second order regularity, this is better expressed in terms of the Stress field $\A(\nabla u)$ given by \eqref{stress:field}. 

\begin{theorem}\label{thm:weightH1}
    Let $D\subsetneq \mathbb{S}^{N-1}$ be a domain of class $C^{2,\beta}$, $\beta\in (0,1)$, and such that $\Sigma_D$ is a Lipschitz cone. Assume that $u\in W^{1,p}(\Sigma_D\cap B_2)$ is a weak solution to either \eqref{eq:bdryNeu} or \eqref{eq:bdryDir}, with 
\begin{equation}\label{ass:f2}
    f\in L^{\max\{N/p,1\}+\delta}(\Sigma_D\cap B_2)\cap  L^2\big( \Sigma_D\cap B_2;|x|^{2p}\,dx\big)
\end{equation}
    for some small $\delta>0$. Then $\A(\nabla u)\in W^{1,2}_{\rm{loc}}\big((\Sigma_D\cap B_2)\setminus \{0\}\big)$ and 
    \begin{equation}\label{thesis:Sob}
        \nabla\A(\nabla u)\in L^2\big(\Sigma_D \cap B_1;\,|x|^{2p}dx\big).
    \end{equation}
    \end{theorem}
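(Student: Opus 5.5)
The plan is to exploit the scaling invariance of $\Sigma_D$: a dyadic decomposition in annular sectors, local second-order estimates on each rescaled sector, and a weighted summation. Set $A_k:=\Sigma_D\cap\{2^{-k-1}<|x|<2^{-k}\}$ and the enlarged sectors $\widetilde A_k:=\Sigma_D\cap\{2^{-k-2}<|x|<2^{-k+1}\}$. For each $k$ define the rescaled function $u_k(y):=2^{k}\big(u(2^{-k}y)-m_k\big)$, with $m_k$ the mean of $u$ on $\widetilde A_k$ in the Neumann case and $m_k=0$ in the Dirichlet case. Since $\Sigma_D$ is a cone, $u_k$ solves the same boundary value problem on the fixed set $\widetilde A_0$, with right-hand side $f_k(y)=2^{-k}f(2^{-k}y)$; note that $\nabla u_k(y)=\nabla u(2^{-k}y)$. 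The point is that $\widetilde A_0$ stays away from the vertex, so its lateral boundary is a $C^{2,\beta}$ hypersurface (with $C^{2,\beta}$ norm controlled uniformly, since it is the same set for every $k$). Hence the known second-order regularity theory for $p$-Laplace problems in smooth (not necessarily convex) domains, with Dirichlet or Neumann conditions (\cite{AC251,CM14,cia}), applies uniformly. That theory gives, in the form of an a priori estimate obtained via the integral (Reilly-type) inequality with curvature terms absorbed on the $C^2$ boundary,
\begin{equation*}
\|\nabla\A(\nabla u_k)\|_{L^2(A_0)}\le C\big(\|f_k\|_{L^2(\widetilde A_0)}+\|\nabla u_k\|^{p-1}_{L^{p-1}(\widetilde A_0)}\big)\le C\big(\|f_k\|_{L^2(\widetilde A_0)}+\|\nabla u_k\|^{p-1}_{L^\infty(\widetilde A_0)}\big).
\end{equation*}
Here $C$ depends only on $N,p,D$. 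This local estimate already yields $\A(\nabla u)\in W^{1,2}_{\rm loc}\big((\Sigma_D\cap B_2)\setminus\{0\}\big)$.

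Next I scale back. Since $\A(\nabla u_k)(y)=\A(\nabla u)(2^{-k}y)$, we get $\nabla_y\A(\nabla u_k)=2^{-k}(\nabla\A(\nabla u))(2^{-k}y)$, so $\|\nabla\A(\nabla u_k)\|^2_{L^2(A_0)}=2^{-2k+kN}\|\nabla\A(\nabla u)\|^2_{L^2(A_k)}$. Likewise $\|f_k\|^2_{L^2(\widetilde A_0)}=2^{-2k+kN}\|f\|^2_{L^2(\widetilde A_k)}$. The term involving $\nabla u_k$ is controlled by Theorem \ref{thm:nonlingrad}. Its integrability assumption follows from \eqref{ass:f2}, possibly after reducing $\delta$ or interpolating; if this step fails, I would instead use a Caccioppoli/De Giorgi $L^\infty$ bound for $\nabla u_k$ on $\widetilde A_0$ in terms of $\|\nabla u_k\|_{L^p}$ and $f_k$. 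Theorem \ref{thm:nonlingrad} gives $|\nabla u|\le M|x|^{-1}$, so $\|\nabla u_k\|_{L^\infty(\widetilde A_0)}\lesssim M2^{k}$. Multiplying the squared estimate by $2^{2k-kN}2^{-2kp}$, since $|x|^{2p}\approx 2^{-2kp}$ on $A_k$, gives
\begin{equation*}
\int_{A_k}|\nabla\A(\nabla u)|^2|x|^{2p}dx\lesssim \int_{\widetilde A_k}|f|^2|x|^{2p}dx+2^{-2kp}\,2^{2k-kN}\,M^{2(p-1)}2^{2k(p-1)}.
\end{equation*}
The last term equals $M^{2(p-1)}2^{-kN}$, which is summable in $k$. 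Summing over $k\ge0$, using the finite overlap of the $\widetilde A_k$ and the assumption $f\in L^2(|x|^{2p}dx)$, yields \eqref{thesis:Sob}.

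The main obstacle is the uniform local second-order estimate on $\widetilde A_0$. It must hold up to the lateral boundary but only in the interior of the radial directions, with a constant that is independent of $k$ and with the correct dependence on lower-order norms. The results in \cite{AC251,CM14} are stated globally. I would localize them by multiplying by a cutoff supported in $\{1/4<|y|<2\}$ and treating the resulting commutator terms as in the local interior estimates, which involve only $\|\nabla u_k\|_{L^\infty}$ and the boundary second fundamental form, both uniformly bounded on the fixed set $\widetilde A_0$. I would also first regularize, replacing $\A$ with a nondegenerate approximation and $f$ with smooth data, derive the estimate for the approximations, and pass to the limit by weak compactness in $W^{1,2}$.
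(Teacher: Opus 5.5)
Your overall structure matches the paper's: rescale each dyadic sector to the fixed sector $\widetilde A_0$ away from the vertex, apply a local $W^{1,2}$ estimate for the stress field up to the lateral boundary in terms of $\|f_k\|_{L^2}$ and $\|\A(\nabla u_k)\|_{L^1}=\|\nabla u_k\|_{L^{p-1}}^{p-1}$, and sum with the weight $|x|^{2p}$. This is the paper's Lemma \ref{lemma:H1annuli} followed by scaling, and your exponent bookkeeping is correct.

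The gap is in how you control the lower-order term $\|\nabla u_k\|^{p-1}_{L^{p-1}(\widetilde A_0)}$. You bound it through Theorem \ref{thm:nonlingrad}, but that theorem needs $f\in L^{N+\delta}(\Sigma_D\cap B_2;|x|^{(p-1)N+\delta p}dx)$, and this is \emph{not} implied by \eqref{ass:f2}. Away from the vertex the weights are comparable to constants, and $N+\delta$ is strictly larger than both available exponents $\max\{N/p,1\}+\delta$ and $2$. Neither reducing $\delta$ nor interpolating can produce it. Your fallback fails for the same reason: under \eqref{ass:f2} the gradient need not be locally bounded at all. Already for $p=2$ the data give only $u\in W^{2,s}$ locally with $s\le N$, so there is no $L^\infty$ bound for $\nabla u_k$. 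The same objection applies to your plan of controlling the cutoff terms in the localized Reilly-type inequality by $\|\nabla u_k\|_{L^\infty}$. In the paper these terms, of the form $\int|\A(\nabla v)|^2|\nabla\phi|^2\,dx$, are reduced to the $L^1$ norm of the stress field via the Sobolev--Poincar\'e inequalities on (boundary-adapted) annuli, Lemmas \ref{lem:sobannuli}--\ref{lemm:bdr:sobannuli}, together with an iteration lemma.

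The missing idea is that no pointwise gradient bound is needed. It suffices to know $\int_{\widetilde A_k}|\nabla u|^{p-1}\,dx\lesssim 2^{k(p-1)-kN}$, and this follows by H\"older's inequality from the uniform weighted (Morrey-type) energy estimate \eqref{tesi:energia} of Lemma \ref{lemma:energia}: $\mint_{\Sigma\cap B_{3R/2}}|x|^p|\nabla u|^p\,dx\le C$ for all $R\in(0,1]$. That estimate is proved in two steps. First, $u$ is bounded near the vertex, \eqref{tesi:linf}, by De Giorgi--Nash--Moser theory up to the Lipschitz boundary of the cone. This is exactly where the hypothesis $f\in L^{\max\{N/p,1\}+\delta}$ is used, and your argument never otherwise exploits it; it also gives $f\in L^{(p^*)'}$, needed for the approximation in the local lemma. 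Second, one tests the equation with $u\,(\eta|x|)^p$ and absorbs the cross term by Young's inequality. If you replace your bound $\|\nabla u_k\|_{L^\infty(\widetilde A_0)}\lesssim M2^k$ with $\|\nabla u_k\|^{p-1}_{L^{p-1}(\widetilde A_0)}\lesssim 2^{k(p-1)}$, your computation goes through unchanged and yields the same summable term $2^{-kN}$.
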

%     there exists a positive constant $C_0$ depending on $N,p,\d,\mathrm{diam}(D),\mathcal{L}_{D},\|\partial D\|_{C^{1,1}}$ such that
% \begin{equation}
% \begin{split}
%     \big\|&\nabla \A(\nabla u)\big\|_{L^2(\Sigma \cap B_1;\,|x|^{2p}dx)}
%     \\
%     &\leq C\,\bigg(N,p,\delta,\mathrm{diam}(D), \mathcal{L}_D,\|D\|_{C^{1,1}},\|u\|_{W^{1,p}(\Sigma\cap B_2)}, \|f\|_{L^{\max\{N/p,1\}+\delta}(\Sigma\cap B_2)},\|f\|_{L^{2}(\Sigma\cap B_2;\, |x|^{2p}dx)} \bigg).
%     \end{split}
% \end{equation}

\begin{remark}\label{Remark:derseccono}
\rm{
Let $\Omega\subset\mathbb{R}^N$ be a bounded domain, and denote by
$\mathcal{B}_\Omega(x)$ the second fundamental form of $\partial\Omega$ at
$x\in\partial\Omega$. A sufficient condition for solutions to
\eqref{eq:bdryNeu}-\eqref{eq:bdryDir} to be globally Lipschitz continuous is
$\mathcal{B}_\Omega\in L^{N-1,1}(\partial\Omega)$; see
\cite{AC251,CM11,CM14} for the corresponding results and for the definition
of the relevant Lorentz spaces $L^{N-1,q}$.

As for global $W^{2,2}$-regularity, a weaker sufficient condition is
$\mathcal{B}_\Omega\in L^{N-1,\infty}(\partial\Omega)$, together with the
smallness assumption
\begin{equation}\label{condition}
    \lim_{r\to0^+}\sup_{x_0\in\partial\Omega}
    \|\mathcal{B}_\Omega\|_{L^{N-1,\infty}
    (\partial\Omega\cap B_r(x_0))}
    <\kappa_0,
\end{equation}
where $\kappa_0>0$ is a sufficiently small constant depending on
$N$, $\operatorname{diam}\Omega$, and the Lipschitz character
$\mathcal{L}_\Omega$ of $\Omega$; see \cite{ACCFM25,cia}.

Suppose now that, in a neighborhood of the origin, $\Omega$ agrees with the
Lipschitz cone $\Sigma_D$ generated by a smooth domain
$D\subsetneq\mathbb{S}^{N-1}$. Denoting by $\mathcal{B}_D$ the second
fundamental form of $\partial D$ in $\mathbb{S}^{N-1}$, a simple computation
based on the scaling properties of the cone and its curvatures yields
\begin{equation}\label{stima:BS}
    |\mathcal{B}_{\Sigma_D}(x)|
    \leq
    C(N,\mathcal{L}_{\Sigma_D})
    \frac{\|\mathcal{B}_D\|_{L^\infty(\partial D)}}{|x|}.
\end{equation}
Consequently,
$\mathcal{B}_{\Sigma_D}\in
L^{N-1,\infty}_{\mathrm{loc}}(\partial\Sigma_D)$, whereas, unless the cone is
flat,
$\mathcal{B}_{\Sigma_D}\notin
L^{N-1,1}_{\mathrm{loc}}(\partial\Sigma_D)$. Thus, the above sufficient
condition for global Lipschitz regularity fails at the vertex.

On the other hand, estimate \eqref{stima:BS}, together with the scale
invariance of the weak-$L^{N-1}$ norm, implies that
$$
    \|\mathcal{B}_{\Sigma_D}\|_{L^{N-1,\infty}
    (\partial\Sigma_D\cap B_r)}
    \leq
    C(N,\mathcal{L}_{\Sigma_D})
    \|\mathcal{B}_D\|_{L^\infty(\partial D)}
$$
for every $r>0$. Therefore, the smallness condition \eqref{condition}, and
hence the corresponding global $W^{2,2}$-regularity result, applies provided
that $\|\mathcal{B}_D\|_{L^\infty(\partial D)}$ is sufficiently small. In view
of property $(1)$ in Theorem \ref{thm}, it would be interesting to
investigate whether this curvature smallness condition can be related to the
first eigenvalue $\lambda_1(D)$.
}
\end{remark}

\begin{remark}
\rm{
Following the proof of Theorem \ref{thm:nonlingrad} in Section \ref{sec:nonlin},
we believe that, if one is interested only in the weighted Lipschitz estimate
\eqref{weight:xnablau}, the assumption
\[
f\in L^{N+\delta}\bigl(\Sigma_D\cap B_2;\,
|x|^{(p-1)N+\delta p}\,dx\bigr)
\]
can likely be relaxed to a suitable condition formulated in terms of a weighted
Lorentz space of type $L^{N,1}$, in the spirit of the sharp regularity results
of \cite{CM11,CM14,KM12,KM13,KM14,KM141,KM18}.

Moreover, the assumption $\partial D\in C^{2,\beta}$ in
Theorem \ref{thm:weightH1} is purely technical. In view of
Remark \ref{Remark:derseccono}, it should be possible to replace it with the
weaker assumption $\partial D\in W^2L^{N-1,\infty}$, together with a suitable
smallness condition as in \eqref{condition}.
Furthermore, Theorems \ref{thm:nonlingrad}-\ref{thm:weightH1} could be extended to a broader class of nonlinear operators, along the lines of \cite{A26}.

For the sake of conciseness, we do not pursue these refinements here.
}
\end{remark}

Finally, coming back to the  linear problem, and the condition $\lambda_1(D) \geq N-1$, which represents
a sharp threshold for the Lipschitz continuity of any solution, it is
interesting to point out that a similar condition appears in other,
apparently unrelated, problems in cones. One of these is the critical
exponent problem in the cone $\Sigma_D$:
\begin{equation}\label{critical equation}
\begin{cases}
-\Delta u = u^{2^*-1} & \text{in } \Sigma_D,\\
\dfrac{\partial u}{\partial \nu}=0 & \text{on } \partial\Sigma_D,
\end{cases}
\qquad
2^*=\frac{2N}{N-2}, \quad N\geq 3,
\end{equation}
for which it has been proved in \cite{LPT} that if $D$ is convex, then the standard ``bubble''
\[
U(x)=\frac{C}{(1+|x|^2)^{\frac{N-2}{2}}},
\]
(where $C$ is a suitable constant) is, up to rescaling, the only
positive solution of \eqref{critical equation} (see \cite{CFR} for more general critical equations).
It has been proved in \cite{CPP} that this
characterization does not hold whenever
$\lambda_1(D)<N-1$, and actually nonradial positive solutions exist. 

Note that  Theorem \ref{thm} combined with the Kelvin invariance of the problem \eqref{critical equation},  implies any weak solution $u$ of \eqref{critical equation} have globally bounded derivatives in the whole $\Sigma_D$; see, e.g., \cite{PPP25}.

Another, more geometrical, question is the characterization of the
minimizers for the relative isoperimetric inequality in cones, which are the
spherical sectors $\Sigma_D\cap B_R$, $R>0$, if $D$ is convex 
\cite{CRS,FI,LP,RR},
while they are other nonradial domains when
$\lambda_1(D)<N-1$  \cite{IPW}.

Similar symmetry results hold in convex cones also for the
characterization of constant mean curvature hypersurfaces in cones and for domains in $\Sigma_D$ which admit solutions for the
overdetermined torsion problem \cite{PT}, while break of
symmetry occurs whenever
$\lambda_1(D)<N-1$  \cite{IPW}.

In view of these results, it can be conjectured that for the above
problems, the same symmetry characterizations which hold for convex
cones should be extended to the case $\lambda_1(D)\geq N-1$, but
so far no proofs have been obtained and the question seems difficult
to attack.

Therefore, it is particularly remarkable that the regularity result
of Theorem 1.1 shows that $\lambda_1(D)\geq N-1$ is the sharp
condition which allows one to extend to solutions in nonconvex
spherical sectors the same regularity holding in the convex
framework. In other words, the convexity assumption can be removed as
long as $\lambda_1(D)$ stays above (or is equal) to $N-1$.

This is the first result of this kind, to our knowledge. 

The paper is
organized as follows.
    In Section \ref{sec:auxiliary}, we introduce the notation and collect some preliminary results. In Section \ref{sec:nonlin}, we prove Theorems \ref{thm:nonlingrad} and \ref{thm:weightH1}, while Section \ref{sec:laplneu} is devoted to the proof of Theorem \ref{thm}.

\section{Notation and preliminary  results}\label{sec:auxiliary}
Given $p>1$, we set 
\begin{equation*}
    p^*:=\begin{cases}
        \frac{Np}{N-p}\quad &\text{if $1<p<N$}
       \\
        \text{any number $>2$} \quad&\text{if $p\geq N$.}
 \end{cases}
\end{equation*}
We also write $p'=p/(p-1)$ to denote the H\"older conjugate of $p$.

For a measurable set $\Omega$ with positive measure, we will denote by $\mathrm{diam}(\Omega)$ its diameter, by $|\Omega|$ its Lebesgue measure, and
\begin{equation*}
    \mint_\Omega v\,dx:=\frac{1}{|\Omega|} \int_\Omega v\,dx
\end{equation*}
whenever $v$ is a measurable function on $\Omega$, either nonnegative or integrable.

Given $\Omega\subset \R^N$ a bounded open set, we write $W^{1,p}(\Omega)$ for the Sobolev space of functions $v$ whose distributional gradient $\nabla v\in L^p(\Omega)$. The set   $W^{1,p}_c(\Omega)$ will denote the space of functions $v\in W^{1,p}(\Omega)$ which are compactly supported in $\Omega$, while $W^{1,p}_0(\Omega)$ is the closure of $C^{\infty}_c(\Omega)$ in the $W^{1,p}$-norm $\|v\|_{W^{1,p}}=\|v\|_{L^p(\Omega)}+\|\nabla v\|_{L^p(\Omega)}$. When $p=2$, we will simply write $W^{1,2}(\Omega)=H^1(\Omega), W^{1,2}_0(\Omega)=H^1_0(\Omega),W^{1,2}_c(\Omega)=H^1_c(\Omega) $. We write $\mathrm{supp}\, \phi$ to denote the support of a function $\phi$.

The space $C^{0,\alpha}(\Omega)$, $\alpha\in (0,1]$, will denote the space of $\alpha$-H\"older continuous functions $v:\Omega\to \R^d$, that is they satisfy $\|v\|_{C^{0,\alpha}(\Omega)}\coloneqq \|v\|_{L^\infty(\Omega)}+ [v]_{C^{0,\alpha}(\Omega)}<\infty$, where we set
\begin{equation*}
    [v]_{C^{0,\alpha}(\Omega)}\coloneqq \sup_{\substack{x,y\in\Omega\\ x\neq y}}\frac{|v(x)-v(y)|}{|x-y|^\alpha}
\end{equation*}

 We will also use the following  shorthand notation: given a cone $\Sigma\subset \R^N$ as in \eqref{def:cone}, we will write
\begin{equation}\label{short:hand}
    \Sigma_{r,R}:=\Sigma\cap \big(B_{R}\setminus B_r\big),\quad \partial\Sigma_{r,R}:=\partial\Sigma\cap \big(B_{R}\setminus B_r\big)
\end{equation}
for any couple of radii $0<r<R$.

% We will also need the following lemma.
% \begin{lemma}\label{lemma:holder:annuli}
% Let $\Sigma\subset \R^N$ be a cone with vertex at the origin, and suppose that a given vector field $H:\Sigma\to \R^d$, $d\in \N$ fulfills the H\"older continuity condition
% \begin{equation}\label{ass:z1}
%     [H]_{C^{0,\alpha}(\Sigma_{R/2,R})}\leq c_0\quad\text{for all $R\leq 1$,}
% \end{equation}
% for some $\alpha \in (0,1]$ and constant $c_0>0$. Then $V\in C^{0,\alpha}(\overline{\Sigma}\cap B_1)$, with estimate
% \begin{equation*}
%     [H]_{C^{0,\alpha}(\Sigma_{R/2,R})}\leq C_2(N,c_0).
% \end{equation*}

% \end{lemma}

Let us now recall the definition of the sets we shall be dealing with. 

\begin{definition}[Lipschitz and $C^{1,\alpha}$-domains]\label{def:lip}
\rm{An open, connected set $\Omega$ in $\R^N$ is called a Lipschitz domain if 
 there exist  constants $L_\Omega>0$ and $R_\Omega \in (0, 1)$ 
such that, for every $x_0\in \partial \Omega$ and $R\in (0, R_\Omega]$ there exist an isometry $T$ such that $T(x_0)=0$,  and an $L_\Omega$-Lipschitz continuous function 
$\phi : B'_{R}\to (-\ell, \ell)$, where 
\begin{equation}\label{ell}
\ell = R (1+L_\Omega),
\end{equation}
satisfying $\phi(0')=0$ , and
\begin{equation}\label{may100}
\begin{split}
    &T(\partial \Omega) \cap \big(B'_{R}\times (-\ell,\ell)\big)=\{(x', \phi (x'))\,:\,x'\in B'_{R}\},
    \\
    & T(\Omega) \cap \big(B'_{R}\times (-\ell,\ell)\big)=\{(x',x_n)\,:\,x'\in B'_{R}\,,\,\phi (x')<x_n<l\},
\end{split}
\end{equation}
where $B'_R$ is the $(N-1)$-dimensional ball of radius $R>0$ and centered at $0'\in\R^{N-1}$.
Moreover, we set
\begin{equation}\label{may101}
\mathcal L_\Omega = (L_\Omega, R_\Omega),
\end{equation}
and call $\mathcal L_\Omega$ a Lipschitz characteristic of $\Omega$.

If in addition $\phi$ is of class $C^{1,\alpha}(B'_R)$ for some $\alpha\in (0,1]$, then we say that $\partial \Omega$ is of class $C^{1,\alpha}$, and we define the norm $\|\partial\Omega\|_{C^{1,\alpha}}$as follows:
given $\{x_i\}_{i=1}^N \subset \partial\Omega$ such that $B_{R_\Omega}(x_i)$ form an open cover of
$\partial\Omega $, 
and let $\{\phi_i\}_{i=1}^N=\{\phi_{x_i}\}_{i=1}^N$ be coordinate charts satisfying \eqref{may100}, we set
\begin{equation}\label{normadeom}
\|\partial\Omega\|_{C^{1,\alpha}}
:= \sup_{i=1,\dots,N} \|\phi_i\|_{C^{1,\alpha}}.
\end{equation}
}
\end{definition}

It is well known  that $\Omega$ is of class $C^{1,1}$ if and only if it admits a weak second fundamental form $\mathcal{B}\in L^\infty(\partial \Omega)$, with estimate
\begin{equation}\label{curvature:twosided}
    \|\mathcal{B}\|_{L^\infty(\partial \Omega)}\leq C_1\,\|\partial\Omega\|_{C^{1,1}},
\end{equation}
where the constant $C_1>0$ depends on $N,\mathrm{diam}(\Omega), \mathcal{L}_\Omega$ (see for instance \cite{A24}, in particular Eq. (2.10) there).

 By simply passing to local coordinates, analogous definition and observations can be extended to domains $\Omega$ on the sphere $\mathbb{S}^{N-1}$. The following observation will be frequently used.

\begin{remark}\label{remark:lipconst}
\rm{ Let $D$ be a domain on the sphere $\mathbb{S}^{N-1}$ such that $\Sigma=\Sigma_D$ is a Lipschitz cone around the origin. Then, by passing to polar coordinates, it is easy to see that, for all $0<r<R$, the Lipschitz characteristics  $\mathcal{L}_{\Sigma_{r,R}}=(L_{\Sigma_{r,R}}, R_{\Sigma_{r,R}})$ of $\Sigma_{r,R}$ depend only on $N,r,R$ and the Lipschitz characteristics $\mathcal{L}_D=(L_D,R_D)$ of $D$. Moreover, we can estimate  the diameter $\mathrm{diam}(\Sigma_{r,R})$ via  $N,r,R$ and $\mathrm{diam}(D)$, and the Lebesgue norm
\begin{equation}\label{mis:sigma}
    c(N,s,t,\mathcal{L}_D)\,R^N\leq|\Sigma\cap (B_{tR}\setminus B_{sR})|\leq C(N,s,t,\mathcal{L}_D)\,R^N,\quad\text{$0\leq s<t$.}
\end{equation}

If in addition $D$ is of class $C^{1,\beta}$, $\beta\in (0,1]$, then recalling \eqref{short:hand}, we have that $\partial \Sigma_{r,R}$ is of class $C^{1,\beta}$, with $\|\partial \Sigma_{r,R}\|_{C^{1,\beta}}$ depending on $N,r,R,\mathcal{L}_D$  and $\|\partial D\|_{C^{1,\beta}}$.}
\end{remark}

\subsection{Sobolev inequalities on annuli}
Let us recall  the following Sobolev inequality \cite[Eq. (5.4)]{cia} (see also \cite[Lemma 2.1]{GM23}, or \cite[Lemma 3.6]{ACP25}).

\begin{lemma}\label{lem:sobannuli}
   Let $R\in (0,1]$, and let $v\in W^{1,2}(B_{2R}\setminus B_R)$. Then there exists a positive constant $C=C(N)$ such that, for every $\epsilon>0$ and for every  $\sigma,\tau$ satisfying
   \[
   R\leq \sigma<\tau\leq 2R,
   \]
   the following Sobolev-Poincar\'e  inequality holds
   \begin{equation}
       \int_{B_\tau\setminus B_\sigma} v^2\,dx\leq C\,\epsilon^2\,R^2\,\int_{B_\tau\setminus B_\sigma} |\nabla v|^2\,dx+\frac{C}{\epsilon^N\,(\tau-\sigma)\,R^{N-1}}\,\bigg( \int_{B_\tau\setminus B_\sigma} |v|\,dx\bigg)^2.
   \end{equation}
\end{lemma}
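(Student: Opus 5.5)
The plan is to decompose the shell $A:=B_\tau\setminus B_\sigma$ into finitely many cells of diameter comparable to $\epsilon R$. We then apply on each cell a Poincaré inequality with the mean-value term kept, and sum over the cells. Testing the inequality with a constant $v$ shows that it can only hold for $\epsilon$ bounded, since the right-hand side behaves like $\epsilon^{-N}|A|$ times $v^2$. So we work with $\epsilon\in(0,1]$, which is the regime in which the lemma is used. By the scaling $x\mapsto x/R$ both sides scale in the same way, so we may assume $R=1$, with $1\le\sigma<\tau\le 2$ and $t:=\tau-\sigma\le 1$.

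For the decomposition we use polar coordinates $x=\rho\theta$. We split the sphere $\mathbb S^{N-1}$ into $M\approx \epsilon^{-(N-1)}$ patches $P_j$. Each patch is bi-Lipschitz, with constants depending only on $N$, to an $(N-1)$-cube of side $\epsilon$. For the radial direction there are two cases.
\begin{itemize}
\item If $\epsilon\le t$, we split $[\sigma,\tau]$ into intervals $I_k$ of length comparable to $\epsilon$. The cells $Q_{jk}=\{\rho\theta:\rho\in I_k,\theta\in P_j\}$ then have diameter $\le C\epsilon$ and volume $\ge c\,\epsilon^N\ge c\,\epsilon^N t$.
\item If $\epsilon>t$, we take the single interval $[\sigma,\tau]$. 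The cells $Q_j$ then have diameter $\le C\epsilon$ and volume $\ge c\,t\,\epsilon^{N-1}\ge c\,\epsilon^N t$.
\end{itemize}
In both cases every cell $Q$ satisfies $\operatorname{diam}Q\le C\epsilon$ and $|Q|\ge c(N)\,\epsilon^N t$. Since $\rho\in[1,2]$, each cell is the image of a rectangular box, whose sides are $\epsilon$ or $t$, under a bi-Lipschitz map with constants depending only on $N$.

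On each cell we use the Poincaré inequality in the form
\[
\int_Q v^2\,dx\le C(\operatorname{diam}Q)^2\int_Q|\nabla v|^2\,dx+\frac{C}{|Q|}\Big(\int_Q|v|\,dx\Big)^2 .
\]
This follows from $\int_Q (v-\bar v_Q)^2\le C\operatorname{diam}(Q)^2\int_Q|\nabla v|^2$ together with $\int_Q\bar v_Q^2=|Q|^{-1}(\int_Q v)^2$. The point requiring care, and the main obstacle, is that the constant in the first step must not depend on the aspect ratio of the cell, which may be very thin (side $t$ against $\epsilon$). We plan to handle this as follows. For a convex box, the Payne--Weinberger inequality gives the Poincaré constant $\operatorname{diam}^2/\pi^2$ uniformly in the aspect ratio. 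We transfer this to our cells through the bi-Lipschitz map. That map changes Dirichlet integrals and $L^2$ norms only by factors depending on $N$, and the mean-value subtraction is harmless, since $\int(v-c)^2$ is minimized at the mean. Alternatively, one can argue directly by integrating along radial segments and then along great-circle arcs in $P_j$.

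Finally we sum over all cells. The gradient terms add up to at most $C\epsilon^2\int_A|\nabla v|^2$. For the mean terms we use $|Q|\ge c\,\epsilon^N t$ and the elementary bound $\sum_Q a_Q^2\le(\sum_Q a_Q)^2$ for $a_Q\ge0$, which gives
\[
\sum_Q \frac{1}{|Q|}\Big(\int_Q|v|\Big)^2\le \frac{C}{\epsilon^N t}\Big(\int_A|v|\,dx\Big)^2 .
\]
Scaling back to general $R$ yields the asserted inequality with $C=C(N)$.
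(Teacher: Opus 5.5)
Your proof is correct. It is also genuinely independent of the paper, which does not prove this lemma at all but simply recalls it from \cite[Eq.~(5.4)]{cia} (see also \cite{GM23,ACP25}).

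Your route is a self-contained covering argument. After scaling to $R=1$, you tile the shell by polar cells that are uniformly bi-Lipschitz images of boxes with sides comparable to $\epsilon$, or with one side $t=\tau-\sigma$ and the rest comparable to $\epsilon$ when $t<\epsilon$. This gives $\mathrm{diam}\,Q\lesssim\epsilon$ and $|Q|\gtrsim\epsilon^N t$. Each cell is then handled by the aspect-ratio-free Poincar\'e inequality on a box, transported to $Q$ using that $c\mapsto\int_Q(v-c)^2$ is minimized at the mean. For a box you do not even need Payne--Weinberger: the optimal constant is $\ell_{\max}^2/\pi^2$, with $\ell_{\max}$ the longest side. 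Finally, $\sum_Q a_Q^2\le(\sum_Q a_Q)^2$ assembles the mean terms into exactly $\epsilon^{-N}t^{-1}\big(\int|v|\big)^2$.

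The one detail worth writing out is the uniform tiling. One way is to radially project onto $\mathbb S^{N-1}$ a subdivision of the faces of $\partial[-1,1]^N$ into cubes of side in $[\epsilon/2,\epsilon]$. The identity $|\rho_1\theta_1-\rho_2\theta_2|^2=(\rho_1-\rho_2)^2+\rho_1\rho_2|\theta_1-\theta_2|^2$ then gives the lower Lipschitz bound of the polar map for $\rho\in[1,2]$.

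The citation buys brevity. Your argument buys an elementary proof, uniform in $N\ge2$, with a transparent constant. It also exposes an inaccuracy in the statement. As you observe, testing with $v\equiv1$ shows the inequality cannot hold with $C=C(N)$ for all $\epsilon>0$. So ``for every $\epsilon>0$'' (here and in Lemma~\ref{lemm:bdr:sobannuli}) should read $\epsilon\in(0,1]$. This is harmless for the paper, since the lemma is only used, directly or through Lemma~\ref{lemm:bdr:sobannuli}, with $\epsilon=(r-s)R_0^{-1}\le 1/2$ (see \eqref{annuli2}).
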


We shall also need a version of Lemma \ref{lem:sobannuli} localized around the boundary of a Lipschitz domain $\Omega$. To this end, we need to introduce some notation. For any $x_0\in \partial\Omega$, by Definition \ref{def:lip} we may find an isometry $T$, with $T(x_0)=0$, and an $L_\Omega$ Lipschitz map $\phi$ satisfying \eqref{may100}. Thus,  for all $x\in B_{R_\Omega}(x_0)$, it is well defined the boundary flattening map $\mathcal{F}$ 
\begin{equation}\label{flatten}
    \mathcal{F}(x',x_n)=\big(y',y_n-\phi(y')\big),\quad y=Tx,
\end{equation}
whose inverse is given by $\mathcal{F}^{-1}(y',y_n)=(x',x_n+\phi(x'))$, $x=T^{-1}y$. In particular, by Definition \ref{def:lip}, we have $\mathcal{F}(\Omega\cap B_{R_\Omega}(x_0))\subset \{x_n>0\}$ and  $\mathcal{F}(\partial\Omega\cap B_{R_\Omega}(x_0))\subset \{x_n=0\}$; moreover, by the $L_\Omega$-Lipschitz continuity of $\phi$, we have
\begin{equation}\label{bound:flattn}
    \|\nabla \mathcal{F}\|_\infty+\|\nabla \mathcal{F}^{-1}\|_\infty\leq C(N)(1+L_\Omega),
\end{equation}
 and since $|\phi(x')|\leq L_\Omega\,|x'|$ as $\phi(0')=0$, it is immediate to see  that $\mathcal{B}_{R_\Omega/\underline{C}(1+L_\Omega)}\subset B_{R_\Omega}$ for some constant $\underline{C}=\underline{C}(N)>0$.

Accordingly, we define the new ``boundary adapted balls'' 
\begin{equation}\label{flatten:balls}
    \mathcal{B}_r\equiv \mathcal{B}_r(x_0):=\mathcal{F}^{-1}\big(B_r\big).
\end{equation}

\begin{figure}[ht]\label{fig1}
\centering
\includegraphics[width=1\textwidth]{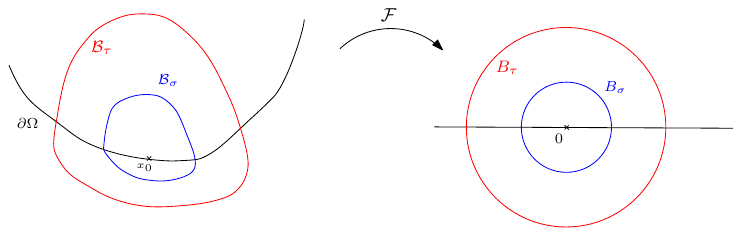}
\caption{The boundary adapted balls $\mathcal{B}_r$.}
\end{figure}

Under this notation, we now establish a boundary Sobolev inequality on annuli, which will follow by combining Lemma \ref{lem:sobannuli} with a flattening-reflection argument.

\begin{lemma}[Sobolev inequality on boundary annuli]\label{lemm:bdr:sobannuli}
    Let $\Omega\subset \R^N$ be a Lipschitz domain, with Lipschitz characteristics $\mathcal{L}_\Omega=(L_\Omega,R_\Omega)$. Let $x_0\in \partial\Omega$, $R\leq R_\Omega/\big(2\underline{C}(1+L_\Omega)\big)$, and let  $v\in W^{1,2}\big(\Omega\cap (\mathcal{B}_{2R}\setminus \mathcal{B}_{R})\big)$. Then there exists a constant $\underline{C}=\underline{C}(N,L_\Omega)>0$ such that
    \begin{equation}\label{eq:sob:bdryannuli}
               \int_{\Omega\cap (\mathcal{B}_\tau\setminus \mathcal{B}_\sigma)} v^2\,dx\leq C\,\epsilon^2\,R^2\,\int_{\Omega\cap(\mathcal{B}_\tau\setminus \mathcal{B}_\sigma)} |\nabla v|^2\,dx+\frac{C}{\epsilon^N\,(\tau-\sigma)\,R^{N-1}}\,\bigg( \int_{\Omega\cap(\mathcal{B}_\tau\setminus \mathcal{B}_\sigma)} |v|\,dx\bigg)^2    \end{equation}
    holds true for every $\epsilon>0$, and for every $R\leq \sigma<\tau\leq 2R$.
\end{lemma}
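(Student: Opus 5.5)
We transfer the inequality to the flat setting through the flattening map $\mathcal{F}$ of \eqref{flatten} and then use an even reflection, so that Lemma \ref{lem:sobannuli} can be applied to a full annulus. Set $w:=v\circ\mathcal{F}^{-1}$. It is defined on $\mathcal{F}\big(\Omega\cap(\mathcal{B}_{2R}\setminus\mathcal{B}_R)\big)=(B_{2R}\setminus B_R)\cap\{y_n>0\}$. This identity holds because $2R\le R_\Omega/\underline{C}(1+L_\Omega)$ guarantees $\mathcal{B}_{2R}\subset B_{R_\Omega}(x_0)$, where $\mathcal{F}$ is a bi-Lipschitz homeomorphism mapping $\Omega$ to the upper half-space. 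Next extend $w$ evenly across $\{y_n=0\}$ by $\tilde w(y',y_n):=w(y',|y_n|)$. The standard reflection lemma gives $\tilde w\in W^{1,2}(B_{2R}\setminus B_R)$, since the annulus is symmetric under $y_n\mapsto -y_n$. Moreover, for every $R\le\sigma<\tau\le 2R$,
\[
\int_{B_\tau\setminus B_\sigma}\tilde w^2=2\int_{(B_\tau\setminus B_\sigma)^+}w^2,\qquad
\int_{B_\tau\setminus B_\sigma}|\nabla\tilde w|^2=2\int_{(B_\tau\setminus B_\sigma)^+}|\nabla w|^2,\qquad
\int_{B_\tau\setminus B_\sigma}|\tilde w|=2\int_{(B_\tau\setminus B_\sigma)^+}|w| ,
\]
where $(\cdot)^+$ denotes the intersection with $\{y_n>0\}$.

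Now apply Lemma \ref{lem:sobannuli} to $\tilde w$ on $B_\tau\setminus B_\sigma$ with the given $\epsilon$. Substituting the three reflection identities and dividing by $2$ gives the half-annulus inequality for $w$ with constant $C(N)$. The factor $2$ coming from squaring the $L^1$ term is harmless.

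It remains to return to $x$-variables via $x=\mathcal{F}^{-1}(y)$, using $\mathcal{F}\big(\Omega\cap(\mathcal{B}_\tau\setminus\mathcal{B}_\sigma)\big)=(B_\tau\setminus B_\sigma)^+$. This identity is immediate from \eqref{flatten:balls}, because $\mathcal{F}$ is a bijection and $\mathcal{B}_r=\mathcal{F}^{-1}(B_r)$. The map $\mathcal{F}$ is a shear: $T$ is an isometry and $(y',y_n)\mapsto(y',y_n-\phi(y'))$ has Jacobian determinant identically $1$. So the change of variables preserves the $L^2$ and $L^1$ integrals exactly. For the gradient term, the chain rule gives $|\nabla w|\le\|\nabla\mathcal{F}^{-1}\|_\infty|\nabla v|\circ\mathcal{F}^{-1}$. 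By \eqref{bound:flattn}, this only introduces the factor $C(N)(1+L_\Omega)^2$ in front of $\epsilon^2R^2\int|\nabla v|^2$. Collecting the constants yields \eqref{eq:sob:bdryannuli} with $C=C(N,L_\Omega)$.

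The only delicate point is verifying that the reflected function is genuinely $W^{1,2}$ on the whole annulus. This requires that the flattened domain is exactly a half annulus with the flat part of its boundary on $\{y_n=0\}$, and the size restriction on $R$ ensures precisely that $\mathcal{B}_{2R}$ stays inside the chart. Once this is checked, the remaining steps are routine bookkeeping.
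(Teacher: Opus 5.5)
Your proposal is correct and follows essentially the same route as the paper: flatten with $\mathcal{F}$, extend evenly across $\{y_n=0\}$ to the full annulus, apply Lemma \ref{lem:sobannuli}, and change variables back using \eqref{bound:flattn}. Your observation that $\mathcal{F}$ has unit Jacobian determinant is a slight sharpening of the paper's bookkeeping. It shows that only the gradient term picks up a factor $C(N)(1+L_\Omega)^2$, while the $L^2$ and $L^1$ terms are preserved exactly.
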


\begin{proof}
Define  $w=v\circ\mathcal{F}^{-1}$, which belongs to $W^{1,2}\big((B_{2R}\setminus B_R)\cap \{x_n>0\}\big)$, and consider its even extension
\[
\overline{w}(y',y_n):=\begin{cases}
    w(y',y_n)\quad &y_n>0
    \\
    w(y',-y_n)\quad &y_n<0,
\end{cases}
\]
which belongs to $W^{1,2}(B_{2R}\setminus B_R)$. By applying Lemma \ref{lem:sobannuli} to $\overline{w}$, and using that for any $r\geq 1$
\[
\|\overline{w}\|_{L^r(B_\tau\setminus B_\sigma)}=C(r)\,\|w\|_{L^r\big(\{x_n>0\}\cap (B_\tau\setminus B_\sigma)\big)},\qquad \int_{B_\tau\setminus B_\sigma}|\nabla\overline{w}|^2dx=2\,\int_{(B_\tau\setminus B_\sigma)\cap \{x_n>0\}}|\nabla w|^2dx,
\]
we deduce
\begin{equation*}
\begin{split}
    \int_{(B_\tau\setminus B_\sigma)\cap \{x_n>0\}}|w|^2\,dx\leq &\,C(N)\,\e^2\,R^2\,\int_{(B_\tau\setminus B_\sigma)\cap \{x_n>0\}}|\nabla w|^2\,dx
    \\
    &+\frac{C(N)}{\e^N\,(\tau-\sigma)\,R^{N-1}}\,\bigg( \int_{(B_\tau\setminus B_\sigma)\cap \{x_n>0\}}  |w|\,dx\bigg)^2,
    \end{split}
\end{equation*}
from which \eqref{eq:sob:bdryannuli} follows via a change of variables $y=\mathcal{F}(x)$, and taking also into account \eqref{bound:flattn}.
\end{proof}

\subsection{The regularized Stress field}
In the course of the proof, we shall use a regularized version of \eqref{stress:field}: for $\e>0 $, let 
\begin{equation}\label{regul:stress}
    \A_\e(\xi):=\big[\e^2+|\xi|^2 \big]^{\frac{p-2}{2}}\,\xi,\quad \xi\in \R^N.
\end{equation}
We also set
\begin{equation}\label{def:aebeBe}
    a_\e(t):=\big(\e^2 +t^2\big)^{\frac{p-2}{2}},\quad b_\e(t):=a_\e(t)\,t,\quad B_\e(t):=\int_0^t b_\e(s)\,ds.
\end{equation}

A simple computation also shows that
\begin{equation}\label{unif:ineps}
\min\{p-1,1\}\leq i_{b_\e}:=\inf_{t>0}\frac{b_\e'(t)t}{b_\e(t)}\leq \sup_{t>0}\frac{b_\e'(t)t}{b_\e(t)}=:s_{b_\e}\leq\max\{p-1,1\}
\end{equation}
for every $\e\in (0,1)$.

Moreover,  it is well known that (see, e.g., \cite{CFV16})
\begin{equation*}
    c(N,p)\,a_\e\big(|\xi|\big)\,\mathrm{Id}\leq \nabla_\xi \A_\e(\xi)\leq C(N,p)\,a_\e\big(|\xi|\big)\,\mathrm{Id},\quad \text{in the sense of matrices}
\end{equation*}
for all $\xi\in \R^N$, where $\mathrm{Id}$ is the identity matrix on $\R^N$.

Then, for every $C^2$-function $w$, by the chain rule and  \cite[Eq. (3.53)]{ACCFM25} (with $b(t)=b_\e(t)$ and matrix $M=\nabla^2w$), we find
\begin{equation}\label{elm:trace}
    \begin{split}\mathrm{tr}\Big(\big[\nabla \A_\e(\nabla w)\big]^2 \Big)&=\mathrm{tr}\Big(\big[\nabla_\xi \A_\e(\nabla w)\,\nabla^2 w\big]^2 \Big)
    \\
    &\geq c(N,p)\,\Big|\nabla_\xi \A_\e(\nabla w)\,\nabla^2 w \Big|^2=c(N,p)\,\big|\nabla \A_\e(\nabla w ) \big|^2.
    \end{split}
\end{equation}

\section{Proof of Theorems \ref{thm:nonlingrad}-\ref{thm:weightH1}}\label{sec:nonlin}

In order to prove Theorem \ref{thm:nonlingrad}, we start with the following lemma, which encodes global boundedness for $u$ and a Morrey type estimate around the origin for the function $|x|\,\nabla u$.

\begin{lemma}\label{lemma:energia}
  Let $D\subsetneq \mathbb{S}^{N-1}$ be a domain such that $\Sigma=\Sigma_D$ is a Lipschitz cone around the origin.  Let $u_0\in W^{1,p}(\Sigma\cap B_2)$ be a weak solution to either  \eqref{eq:bdryNeu} or \eqref{eq:bdryDir}, and assume that $f$ satisfies the integrability assumption \eqref{f:integr1}. Assume also that
    \begin{equation}\label{en:bddd}
        \|u_0\|_{W^{1,p}(\Sigma\cap B_2)}+\|f\|_{L^{   \max\{N/p,1\}+\delta}(\Sigma\cap B_2)}\leq 2.
    \end{equation}

    Then there exists $C_0=C_0(N,p,\delta,\mathcal{L}_D)>0$ such that
\begin{equation}\label{tesi:linf}
    \|u_0\|_{L^\infty\big(\Sigma\cap B_{7/4 }\big)}\leq C_0,
\end{equation}
    and the following estimate \begin{equation}\label{tesi:energia}
\mint_{\Sigma\cap B_{3/2R}} |x|^p|\nabla u_0|^p\,dx\leq C_0
    \end{equation}
   holds true for all $R\in (0,1]$.
\end{lemma}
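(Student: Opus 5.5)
We will prove the two claims separately: the $L^\infty$ bound by a De Giorgi/Moser iteration up to the boundary, and the weighted Morrey estimate by a Caccioppoli inequality on dyadic annuli of the cone. The argument rests on the scale invariance of $\Sigma$.

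\emph{Step 1: the bound \eqref{tesi:linf}.} In the Dirichlet case we extend $u_0$ by zero outside $\Sigma$. Then $(u_0-k)_\pm$ times a cutoff supported in $B_2$ are admissible test functions, and the standard De Giorgi iteration gives local boundedness. This uses the Sobolev inequality on the bounded Lipschitz set $\Sigma\cap B_2$ and the fact that $f\in L^{N/p+\delta}$.

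In the Neumann case the test functions $\varphi=\eta^p (u_0-k)_+$ with $\eta\in C^\infty_c(B_2)$ are admissible without any vanishing on $\partial\Sigma$. The Sobolev inequality for functions in $W^{1,p}(\Sigma\cap B_\rho)$ holds with constants controlled by $N,p,\mathcal L_D$, since $\Sigma\cap B_\rho$ is Lipschitz, uniformly for $\rho\in[7/4,2]$ (Remark \ref{remark:lipconst}). The De Giorgi iteration on the shrinking sets $\Sigma\cap B_{\rho_j}$, $\rho_j\downarrow 7/4$, then yields
\[
\sup_{\Sigma\cap B_{7/4}}|u_0|\le C\big(\|u_0\|_{L^p(\Sigma\cap B_2)}+\|f\|_{L^{N/p+\delta}}^{1/(p-1)}\big)\le C_0 .
\]
If $p>N$ we instead use the Morrey embedding directly. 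This is why the exponent $\max\{N/p,1\}+\delta$ appears: it is exactly what the truncation estimates need.

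\emph{Step 2: the Caccioppoli inequality on annuli.} Fix $R\in(0,1]$ and take a cutoff $\eta$ with $\eta=1$ on $B_{3R/2}\setminus B_{R}$, $\mathrm{supp}\,\eta\subset B_{7R/4}\setminus B_{R/2}$ and $|\nabla\eta|\le C/R$. The radial cutoff is admissible in both the Neumann and Dirichlet cases. We test the equation with $\varphi=\eta^p(u_0-\bar u)$ in the Neumann case, with $\bar u$ the average over the annular sector, and with $\varphi=\eta^p u_0$ in the Dirichlet case. Young's inequality gives
\[
\int_{\Sigma_{R,3R/2}}|\nabla u_0|^p\le \frac{C}{R^p}\int_{\Sigma_{R/2,7R/4}}|u_0-\bar u|^p+C\int_{\Sigma_{R/2,7R/4}}|f|\,|u_0-\bar u| .
\]
Multiplying by $R^p$ and using $|x|\sim R$ on the annulus, we bound the first term by $C\,\mathrm{osc}(u_0)^p R^N\le C\,C_0^pR^N$ thanks to Step 1. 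We bound the second by $2C_0R^p\int_{\Sigma\cap B_{2R}}|f|\le C R^{p+N(1-1/s)}\|f\|_{L^s}$ with $s=\max\{N/p,1\}+\delta$. Since $p+N-N/s\ge N$ exactly when $s\ge N/p$, this term is $\le CR^N$. (The weighted assumption on $f$ is not even needed here.)

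Dividing by $|\Sigma\cap B_{3R/2}|\sim R^N$ by \eqref{mis:sigma}, and summing the annular bounds over the dyadic radii $2^{-k}R$, we get
\[
\int_{\Sigma\cap B_{3R/2}}|x|^p|\nabla u_0|^p\le C\sum_k (2^{-k}R)^N\le CR^N ,
\]
which is \eqref{tesi:energia}. Every constant depends only on $N,p,\delta,\mathcal L_D$ because of the scale invariance of the Lipschitz character of $\Sigma_{r,2r}$ after rescaling.

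\emph{Main obstacle.} The delicate point is Step 1 in the Neumann case near the vertex. There we need a Sobolev/De Giorgi estimate up to a boundary that is merely Lipschitz, with uniform constants, and the Sobolev inequality on $\Sigma\cap B_\rho$ must hold uniformly in $\rho$ close to $2$. We would handle this by scaling the fixed Lipschitz domain $\Sigma\cap B_1$, or alternatively by covering with the boundary-adapted balls and using Lemma \ref{lemm:bdr:sobannuli}-type reflection arguments.
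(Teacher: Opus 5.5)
Your proof is correct. Step 1 matches the paper, which simply cites boundary De Giorgi--Nash--Moser theory for the $L^\infty$ bound on $\Sigma\cap B_{7/4}$.

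For \eqref{tesi:energia} you take a different route.

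\textbf{The paper's route.} It never localizes away from the vertex. It takes $\eta\equiv1$ on the whole ball $B_{3R/2}$, supported in $B_{7R/4}$, and writes $\mint(\eta|x|)^p|\nabla u_0|^p$ as $\mint\A(\nabla u_0)\cdot\nabla\big(u_0(\eta|x|)^p\big)$ minus the term where the derivative falls on $(\eta|x|)^p$. The first piece is handled by testing the equation with $u_0(\eta|x|)^p$ and using $\|u_0\|_\infty$ plus H\"older on $f$, exactly as in your forcing estimate. For the second piece it uses $|\nabla((\eta|x|)^p)|\le C(\eta|x|)^{p-1}$, which holds because $|x||\nabla\eta|\le C$. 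This bounds the piece by $C\|u_0\|_\infty\mint|x|^{p-1}|\nabla u_0|^{p-1}\eta^{p-1}$, which is reabsorbed by Young's inequality. So the ball estimate comes from one test function, with no annular decomposition.

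\textbf{Your route.} You prove an unweighted Caccioppoli inequality on annuli, $R^p\int_{\Sigma_{R,3R/2}}|\nabla u_0|^p\le CR^N$, and sum over a geometric sequence of radii. This is the more standard argument. It also yields directly the annular bound that is actually used later in \eqref{temp:fin}.

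\textbf{Fixes needed.}
\begin{itemize}
\item As written, the summation has a slip. With $\eta\equiv1$ only on $B_{3R/2}\setminus B_R$, the annuli $\Sigma_{2^{-k}R,\,3\cdot2^{-k-1}R}$ do not cover $\Sigma\cap B_{3R/2}$; they miss $\Sigma_{3\cdot2^{-k-2}R,\,2^{-k}R}$. Sum over the radii $(2/3)^kR$ instead, which still gives $\sum_k(2/3)^{kN}R^N\le CR^N$.
\item Subtracting $\bar u$ in the Neumann case is unnecessary. The $L^\infty$ bound already controls $|u_0|$, so $\eta^p u_0$ works in both cases.
\item The ``main obstacle'' you flag is not really one. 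In the Neumann iteration the truncations $\eta(u_0-k)_+$ vanish near $\Sigma\cap\partial B_{\rho_j}$. You can extend them by zero and apply a single Sobolev inequality on the fixed Lipschitz set $\Sigma\cap B_2$, so no uniformity in $\rho$ is needed.
\end{itemize}
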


\begin{proof}
    We start by proving \eqref{tesi:linf}: since $\Sigma$ is a Lipschitz cone, we may appeal to the De Giorgi-Nash-Moser regularity theory for boundary value problems \cite[Sections 2.6-2.7]{LU68}, \cite[Section 6.5]{G03} (see also \cite[Section 3, Remark 7.4]{A26}) which, by also taking into account Remark \ref{remark:lipconst}, gives
    \begin{equation*}
        \|u_0\|_{L^\infty(\Sigma\cap B_{7/4})}\leq C\big(N,p,\mathcal{L}_D,\delta, \|f\|_{L^{\max\{N/p,1\}+\delta}(\Sigma\cap B_2)}, \|u_0\|_{W^{1,p}(\Sigma\cap B_2)} \big),
    \end{equation*}
which together with \eqref{en:bddd} proves \eqref{tesi:linf}.  

Next we take a cut-off function $\eta\in C^\infty_c(B_{2R})$  such that
\begin{equation}\label{pr:eta}
  \mathrm{supp}(\eta)\Subset B_{7/4 R}, \quad 0\leq \eta\leq 1,\quad    \eta\equiv 1\quad\text{on $B_{3/2R}$,}\quad \|\nabla \eta\|_\infty\leq \frac{C(N)}{R}.
\end{equation}
Then,  we estimate
\begin{equation}\label{temp:0}
    \begin{split}
     \mint_{\Sigma\cap B_{7/4 R}}(\eta\,|x|)^p&\,|\nabla u_0|^p\,dx=\mint_{\Sigma\cap B_{7/4 R}}(\eta\,|x|)^p|\nabla u_0|^{p-2}\nabla u_0\cdot \nabla u_0\,dx
          \\&=\mint_{\Sigma\cap B_{7/4 R}}|\nabla u_0|^{p-2}\nabla u_0\cdot \nabla \big(u_0 (\eta\,|x|)^p \big)\,dx
          \\
          &\qquad\qquad-\mint_{\Sigma\cap B_{7/4 R}}|\nabla u_0|^{p-2}\nabla u_0\cdot \nabla \big( (\eta\,|x|)^p \big)\,u_0\,dx=:(I)+(II).
    \end{split}
\end{equation}
 Testing \eqref{eq:bdryNeu}- \eqref{eq:bdryDir} with function $u_0\,(\eta\,|x|)^p$, and  by exploiting \eqref{tesi:linf} and \eqref{pr:eta},  we obtain
\begin{equation*}
    \begin{split}|(I)|&\leq \mint_{\Sigma\cap B_{7/4 R}}|f|\,(\eta\,|x|)^p\,|u_0|\,dx\leq \|u_0\|_{L^\infty(\Sigma\cap B_{7/4})}\,\mint_{\Sigma\cap B_{2R}}|f|\,|x|^p\,dx
    \\
    &\leq C(N,p,\delta,\mathcal{L}_D)\,R^{p-N}\int_{\Sigma\cap B_{2R}}|f|\,dx,
    \end{split}
\end{equation*}
where in the last inequality we used \eqref{mis:sigma}.
Thus, in the case $p\geq N$, by \eqref{en:bddd} and since $R\leq 1$, we get $|(I)|\leq C(N,p,\delta,\mathcal{L}_D) $, while in the case $p<N$ we 
use H\"older's inequality with exponents $\Big(\tfrac{N}{p}+\delta,\tfrac{N+\delta\,p}{N+p(\delta-1)}\Big)$, \eqref{en:bddd} and that $R\leq 1$, and find
\begin{equation}\label{temp:1}
     \begin{split}|(I)|
    &\leq C'(N,p,\delta,\mathcal{L}_D)\, R^{p-N+N\big(\frac{N+p(\delta-1)}{N+p    \delta}\big)}\,\|f\|_{L^{N/p+\delta}(\Sigma\cap B_{2R})}
    \\
    &= C'(N,p,\delta,\mathcal{L}_D)\,R^{\frac{\delta\,p^2}{N+p\delta}}\,\|f\|_{L^{N/p+\delta}(\Sigma\cap B_{2R})}\leq C''(N,p,\delta,\mathcal{L}_D) .
    \end{split}
\end{equation}

Then we have
\begin{equation}\label{temp:2}
    \begin{split}
     |(II)|&\stackrel{\eqref{pr:eta}}{\leq} C(N,p,\mathcal{L}_D)\,\|u_0\|_{L^\infty(\Sigma\cap B_{7/4 R})}\,\bigg\{\mint_{\Sigma\cap B_{7/4 R}} |\nabla u_0|^{p-1}\big(|x|^{p}\,R^{-1}\,\eta^{p-1}+|x|^{p-1}\,\eta^p \big)\,dx    \bigg\} 
     \\
     &\stackrel{\eqref{tesi:linf}}{\leq} C'(N,p,\delta, \mathcal{L}_D)\,\mint_{\Sigma\cap B_{7/4 R}} |x|^{p-1}\,|\nabla u_0|^{p-1}\,\eta^{p-1}\,dx
     \\
     &\leq \e\, \mint_{\Sigma\cap B_{7/4 R}} (\eta\,|x|)^p\,|\nabla u_0|^{p}\,dx+C(N,p,\delta, \mathcal{L}_D,\e)
    \end{split}
\end{equation}
for all $\e\in (0,1)$, where in the second inequality we  used that $|x|^{-1}\geq (7/4 R)^{-1}$, and in the third one we exploited weighted Young's inequality. Thereby combining \eqref{temp:0}-\eqref{temp:2}, and  by choosing $\e=\e(N,p,\d,\mathcal{L}_D)$ small enough to reabsorb terms, we obtain
\begin{equation*}
    \mint_{\Sigma\cap B_{7/4 R}} (\eta\,|x|)^p\,|\nabla u_0|^{p}\,dx\leq C(N,p,\d,\mathcal{L}_D),
\end{equation*}
which by the properties of $\eta$ in \eqref{pr:eta} yields \eqref{tesi:energia}, that is our thesis.
\end{proof}

\begin{proof}[Proof of Theorem \ref{thm:nonlingrad}]
   Let $u$ be a weak solution to either \eqref{eq:bdryNeu} or \eqref{eq:bdryDir}. We first normalize the norms of $u$ and $f$, respectively; namely, we define the new functions 
    \begin{equation}\label{def:u0}
        u_0:=\frac{u}{\big(\|u\|_{W^{1,p}(\Sigma\cap B_2)}+\|f\|^{1/(p-1)}\big)}\quad\text{and}\quad f_0:=\frac{f}{\big(\|u\|_{W^{1,p}(\Sigma\cap B_2)}+\|f\|^{1/(p-1)}\big)^{p-1}},
    \end{equation}
  where we denoted by $\|f\|\equiv \|f\|_{L^{\max\{N/p,1\}+\delta}(\Sigma\cap B_2)}+\|f\|_{L^{N+\d}\big(\Sigma\cap B_2;\,|x|^{(p-1)N+\delta p}\,dx\big)}$ for brevity. Evidently, we have 
\begin{equation}\label{bounds}
    \|u_0\|_{W^{1,p}(\Sigma\cap B_2)}+\|f_0\|\leq 2,
\end{equation}
and by the homogeneity of the $p$-Laplace operator, we have that $u_0\in W^{1,p}(\Sigma\cap B_2)$ is a weak solution to
\begin{equation}\label{eq:u0}
    -\Delta_pu_0=f_0\quad\text{in $\Sigma\cap B_2$,}
\end{equation}
with the same homogeneous boundary condition as $u$ on $\partial \Sigma\cap B_2$. 
Next we apply a rescaling procedure,  that is, for $R\in (0,1]$ fixed, we consider the new function
\begin{equation}\label{de:vR}
    v_R(x):=u_0(Rx)\quad\text{for $x\in \Sigma\cap B_2$.}
\end{equation}
From \eqref{eq:u0} and via a simple change of variables, one finds that $v_R$ is solution to
\begin{equation}\label{eq:vR}
    -\Delta_p v_R=f_R\quad\text{in $\Sigma\cap B_2$},\quad f_R(x):=R^p\,f_0(Rx),
\end{equation}
and $v_R$ satisfies the same boundary condition of $u_0$ on $\partial \Sigma\cap B_2$.

Since $\Sigma_{\frac12,1}$ is a set of class $C^{1,\beta}$, owing to boundary regularity for the $p$-Laplace equation  \cite[Theorems 1.2-1.3]{A26} (see also \cite{L88}), we have that 
\[
v_R\in C^{1,\alpha}\big(\overline \Sigma_{1/2,1}\big)
\]
for some $\alpha\in (0,1)$ depending on $N,p,\d,\beta,\mathcal{L}_{\Sigma_{1/2,1}}$, together with  the quantitative estimate
\begin{equation}\label{quant:est_vR}
\begin{split}
    \|\nabla v_R&\|_{C^{0,\alpha}\big(\Sigma_{1/2,1})}
    \\
    &\leq C_1\bigg(N,p,\d,\beta,\mathrm{diam}(\Sigma_{1/2,1}),\mathcal{L}_{\Sigma_{1/2,1}},\|\partial\Sigma_{1/2,1}\|_{C^{1,\beta}},\|v_R\|_{W^{1,p}(\Sigma_{1/4,3/2})}, \|f_R\|_{L^{N+\d}(\Sigma_{1/4,\,3/2})}  \bigg).
    \end{split}
\end{equation}

% On the other hand, the quantities $\mathrm{diam}(\Sigma_{1/2,1}),\mathcal{L}_{\Sigma_{1/2,1}},\|\partial\Sigma_{1/2,1}\|_{C^{1,1}}$ depend solely on $\mathrm{diam}(D), \mathcal{L}_D, \|\partial D\|_{C^{1,1}} $,  respectively. {\color{red} Aggiungere tale info nei preliminari}

Therefore, scaling back to $u_0$, and using the observations in Remark \ref{remark:lipconst}, from \eqref{quant:est_vR} we deduce that
\begin{equation}\label{bound:nabu0}
\begin{split}
    \big\|R\,\nabla u_0\big\|&_{L^\infty(\Sigma_{R/2,R})}+\big[R^{1+\alpha}\,\nabla u_0\big]_{C^{0,\alpha}(\Sigma_{R/2,R})}
    \\
    \leq C_1\bigg(&N,p,\d,\beta,\mathrm{diam}(D),\mathcal{L}_{D},\|\partial D\|_{C^{1,\beta}}, \frac{1}{R^N}\int_{\Sigma_{R/4,3R/2}} |u_0|^p\,dx,
    \\
    &  \quad\frac{1}{R^N}\int_{\Sigma_{R/4,3R/2}}R^p\,|\nabla u_0|^p\,dx, \frac{1}{R^N}\int_{\Sigma_{R/4,3R/2}} R^{(N+\d)p}|f_0|^{N+\d}\,dx \bigg),
\end{split}
\end{equation}
with H\"older exponent $\alpha=\alpha(N,p,\beta,\delta,\mathcal{L}_D)\in (0,1)$ which does not depend on $R\in (0,1]$.
We now need to bound (uniformly in $R\in (0,1]$)  the last three integrals in the right hand-side above.

By Lemma \ref{lemma:energia}-- see \eqref{tesi:linf}-- and \eqref{mis:sigma}, we have
\begin{equation}\label{first:int}
    \frac{1}{R^N}\int_{\Sigma_{R/4,3R/2}}|u_0|^p\,dx\leq C(N,\mathcal{L}_D)\,\|u_0\|^p_{L^\infty(\Sigma\cap B_{3/2})}\leq C'(N,p,\mathcal{L}_D,\delta)
\end{equation}
for all $R\in (0,1]$. Moreover, from the energy estimate of Lemma \ref{lemma:energia} coupled with \eqref{mis:sigma} and the fact that $R/4\leq |x|\leq 3/2 R$, we find
\begin{equation}\label{temp:fin}
     \frac{1}{R^N}\int_{\Sigma_{R/4,3R/2}}R^p\,|\nabla u_0|^p\,dx \leq C(N,p,\mathcal{L}_D)\,\mint_{\Sigma\cap B_{3/2 R}}|x|^p\,|\nabla u_0|^p\,dx\leq C'(N,p,\d,\mathcal{L}_D).
\end{equation}
Finally we estimate
\begin{equation}\label{temp:f}
    \begin{split}
        \frac{1}{R^N}\int_{\Sigma_{R/4,3R/2}}& R^{(N+\d)p}|f_0|^{N+\d}\,dx
        \\
        &\leq C(N,p,\delta)\,\int_{\Sigma\cap B_{3/2 R}} |x|^{N(p-1)+\d\,p}|f_0|^{N+\d}\,dx\stackrel{\eqref{bounds}}{\leq} C'(N,p,\delta).
    \end{split}
\end{equation}

Therefore, by coupling \eqref{bound:nabu0} with \eqref{first:int}- \eqref{temp:f}, and since $R/2\leq|x|\leq R$, we deduce
\begin{equation}\label{temp:fin1}
    \tfrac{1}{2}\,\big\|R\,\nabla u_0\big\|_{L^\infty(\Sigma_{R/2,R})}\leq \big\||x|\,\nabla u_0\big\|_{L^\infty(\Sigma_{R/2,R})}\leq C\Big(N,p,\d,\beta,\mathrm{diam}(D),\mathcal{L}_{D},\|\partial D\|_{C^{1,\beta}}\Big)
\end{equation}
and
\begin{equation}\label{temp:uso}
    R^{1+\alpha}|\nabla u_0(x)-\nabla u_0(y)|\leq C_2\,|x-y|^{\alpha}\quad\text{for all $x,y\in \Sigma_{R/2,R}$}
\end{equation}
with $C_2=C_2\big(N,p,\d,\beta,\mathrm{diam}(D),\mathcal{L}_{D},\|\partial D\|_{C^{1,\beta}}\big)>0$.

Since the estimate \eqref{temp:fin1} is valid for all $R\in (0,1]$, this implies
\begin{equation}\label{first:fin}
    \big\||x|\,\nabla u_0\big\|_{L^\infty(\Sigma\cap B_1)}\leq C_3\bigg(N,p,\d,\beta,\mathrm{diam}(D),\mathcal{L}_{D},\|\partial D\|_{C^{1,\beta}}\bigg).
\end{equation}
Going back to $u$ via \eqref{def:u0}, this proves \eqref{weight:xnablau}.

Next, for $x,y\in\Sigma_{R/2,R}$, we estimate
\begin{equation}\label{az}
    \begin{split}
        \Big| |x|^{1+\alpha}\nabla u_0(x)&-|y|^{1+\alpha}\nabla u_0(y)\Big|\leq |x|^{1+\alpha}\big| \nabla u_0(x)-\nabla u_0(y)\big|+\Big||x|^{1+\alpha}-|y|^{1+\alpha} \Big|\,|\nabla u_0(y)|
        \\
        &\leq R^{1+\alpha}\,\big| \nabla u_0(x)-\nabla u_0(y)\big|+C(\alpha)\,R\,|\nabla u_0(y)|\,|x-y|^\alpha
        \\
        &\stackrel{\eqref{temp:uso},\eqref{temp:fin1}}{\leq} (C_2+C(\alpha)\,C_3)|x-y|^\alpha,
    \end{split}
\end{equation}
where in the second inequality we used the elementary estimate $$\Big||x|^{1+\alpha}-|y|^{1+\alpha} \Big|\leq C(\alpha) R^\alpha\,|x-y|\leq C'(\alpha)\,R\,|x-y|^{\alpha}$$ 
which holds true for all $x,y\in B_R\setminus B_{R/2}$. Hence, \eqref{az} shows that
\begin{equation}\label{az1}
    \Big[|x|^{1+\alpha}\,\nabla u_0(\cdot) \Big]_{C^{0,\alpha}(\Sigma_{R/2,R})}\leq C_4\bigg(N,p,\d,\beta,\mathrm{diam}(D),\mathcal{L}_{D},\|\partial D\|_{C^{1,\beta}}\bigg)\quad \text{for all $R\in (0,1]$.}
\end{equation}
We now claim that \eqref{az1} implies the desired H\"older continuity in the whole conical sector $\Sigma \cap B_1$, that is
\begin{equation}\label{second:fin}
    \Big[|x|^{1+\alpha}\,\nabla u_0(\cdot) \Big]_{C^{0,\alpha}(\Sigma\cap B_1)}\leq C_5\bigg(N,p,\d,\beta,\mathrm{diam}(D),\mathcal{L}_{D},\|\partial D\|_{C^{1,\beta}}\bigg),
\end{equation}
which together the definition of $u_0$ in \eqref{def:u0} yields our desired result \eqref{weigh:holdnablau}, that is our thesis.

So let $x,y\in \Sigma\cap  B_1$, and assume without loss of generality that $x\neq y$ and $|x|\leq |y|$. We may also assume that $x\neq 0$, for otherwise  we have  $|y|^{1+\alpha}|\nabla u_0(y)|\leq C|y|^\alpha$ by \eqref{first:fin}.

Thus assume that $x\neq 0$, and let $\ell_x$ be the segment joining $x$  with the origin. Let
\begin{equation*}
    R_k:=2^{k}\,|x|,\quad k=0,1,2,\dots
\end{equation*}
and let us take $x_j\in\ell_x\cap\Sigma\cap\partial B_{R_j}$, $j=0,1,2\dots$, so that $x_0=x$, $x_j=2^{j}\,x$ and $|x_j|=R_j$ (such a choice is admissible as $\Sigma$ is a cone). 

Let $m$ be the integer such that $y\in B_{R_{m+1}}\setminus B_{R_{m}}$, and we may assume that $m\geq 1$ for otherwise the estimate $\big||x|^{1+\alpha}\nabla u_0(x)-|y|^{1+\alpha}\nabla u_0(y)\big|\leq C\,|x-y|^\alpha$ follows from \eqref{az1}.
Let us also define $y_m=\frac{|x_m|}{|y|}y$, so that $y_m$ lies on the line segment $\ell_y$ joining $y$ with the origin,  $|y|\geq R_m=|y_m|=|x_m|$, and we have that 
\[
|y_m-y|=|x_m|-|y|=|y|-R_m\leq R_{m+1}= 2^{m+1}|x|.
\] 
Let us now recall  the following elementary inequality 
\begin{equation}\label{elem:proven}
\bigg|\frac{x}{|x|}-\frac{y}{|y|} \bigg|\leq \frac{4|x-y|}{|x|+|y|}.
\end{equation}
To prove it, for $0<|x|\leq|y|$ observe that 
\[
\begin{split}
    \bigg|\frac{x}{|x|}-\frac{y}{|y|} \bigg|&= \bigg|\frac{|y|x-|x|y}{|x|\,|y|} \bigg|=\bigg| \frac{(|y|-|x|)x+|x|(x-y)}{|x|\,|y|}\bigg|
    \\
    &\leq \frac{||x|-|y||+|x-y|}{|y|}\leq \frac{2|x-y|}{|y|}\leq  \frac{4|x-y|}{|x|+|y|}
\end{split}
\]
 where in the last inequality we used $|y|=|y|/2+|y|/2\geq 1/2\big(|x|+|y|\big)$. Thus, by \eqref{elem:proven}, we have 
\[
|y_m-x_m|=2^{m}\,|x|\bigg|\frac{x}{|x|}-\frac{y}{|y|} \bigg|\leq 2^{m}\,|x|\,\frac{4|x-y|}{|y|}\leq 4|x-y|.
\]

By exploiting this piece of information, by repeatedly using the triangle inequality, \eqref{az1}, that $|x_j-x_{j+1}|=R_j=2^{j}\,|x|$, by setting $H(x):=|x|^{1+\alpha} \nabla u_0(x)$, we get
\begin{equation}\label{az2}
    \begin{split}
        \big|H(x)-H(y)\big|&\leq \sum_{j=0}^{m-1}\big|H(x_j)-H(x_{j+1})\big|+|H(x_m)-H(y_m)|+|H(y_m)-H(y)|
        \\
        &\leq C_4\bigg(\sum_{j=0}^{m-1} |x_j-x_{j+1}|^\alpha+|x_m-y_m|^\alpha+|y_m-y|^\alpha\bigg)
        \\
        &\leq C_4\,|x|^\alpha\,\sum_{j=0}^{m-1}2^{\alpha j}+4^\alpha\,C_4\,|x-y|^\alpha+C_4(2^{m+1}|x|)^\alpha
        \\
        &\leq C_4'(2^{m} |x| )^\alpha+4^\alpha\,C_4\,|x-y|^\alpha,
    \end{split}
\end{equation}
with $C_4'$ depending on the same quantities as $C_4$. We now claim that
\begin{equation}\label{left_toprove}
    2^m|x|\leq 4\,|x-y|,
\end{equation}
which together with \eqref{az2} yields the thesis \eqref{second:fin}. To prove \eqref{left_toprove}, first observe that, by the triangle inequality $2^{m}|x|\leq |y|\leq |x|+|x-y|$. We distinguish two cases: suppose first that $|x-y|\geq |x|$. Then 
 $2^m|x|\leq 2\,|x-y|$, which proves \eqref{left_toprove} in this case. 
We are left to study the cases $|x-y|<|x|$, which implies $2^m|x|\leq |y|< 2|x|$, that is $m=0$, in contradiction with our assumption $m\geq 1$. Equations \eqref{az2}-\eqref{left_toprove} yield \eqref{second:fin}, thus completing the proof.
%Next, let us consider the line segment joining the origin and $x$, and we take a finite sequence of points $\{x_1,\dots,x_{k}\}$ such that $x_j\in \Sigma\cap B_{R_0^j}\setminus B_{R_0^{j+1}}$ for all $j=1,\dots k$ (this is possible since $\Sigma$ is a cone). We also set $x=x_0$. In particular, we have the additivity property
%\begin{equation*}
 %   |x_0-x_k|=\sum_{j=0}^{k-1}|x_j-x_{j+1}|.
%\end{equation*}
\end{proof}

We now move onto the proof of Theorem \ref{thm:weightH1}. First, we show the following
\begin{lemma}\label{lemma:H1annuli}
    Let $D, \Sigma$ be as in Theorem \ref{thm:weightH1}, and let $f\in L^2\cap L^{(p^{*})'}(\Sigma\cap B_2)$. Assume that $v\in W^{1,p}(\Sigma\cap B_2)$ is solution to either  \eqref{eq:bdryNeu} or \eqref{eq:bdryDir}. Then $\A(\nabla v)\in W^{1,2}(\Sigma\cap (B_{1}\setminus B_{1/2}))$, and there exists a constant $C_0$ depending on $N,p,\mathrm{diam}(D),\mathcal{L}_D, \|\partial D\|_{C^{1,1}}$ such that
    \begin{equation}\label{est:annuliH1}
        \|\nabla\A(\nabla v)\|_{L^{2}\big(\Sigma_{1/2,1})}\leq C_0\,\bigg\{\| \A(\nabla v)\|_{L^1\big(\Sigma_{1/4,3/2}\big) }+\|f\|_{L^2\big(\Sigma_{1/4,3/2}\big)} \bigg\}.
    \end{equation}
\end{lemma}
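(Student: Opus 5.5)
We plan to derive \eqref{est:annuliH1} from the known second-order regularity theory for $p$-Laplace equations on smooth domains, localized to the annular sector $\Sigma_{1/4,3/2}$. The key observation is that, away from the origin, $\partial\Sigma$ is a $C^{2,\beta}$ (hence $C^{1,1}$) hypersurface. By Remark \ref{remark:lipconst} and \eqref{curvature:twosided}, its second fundamental form on $\partial\Sigma_{1/8,7/4}$ is bounded in $L^\infty$ by a constant depending only on $N,\mathrm{diam}(D),\mathcal{L}_D,\|\partial D\|_{C^{1,1}}$. The vertex is never touched, so the problem becomes a standard local-in-space, global-in-boundary $W^{1,2}$ estimate for the stress field near a $C^{1,1}$ boundary portion.

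Concretely, we would argue as in \cite{cia,ACCFM25}. We replace $\A$ by the regularized field $\A_\e$ of \eqref{regul:stress}, and $f$ by smooth approximations $f_k$. We then solve the corresponding (Neumann or Dirichlet) problem on a smooth domain $\Omega$ with $\Sigma_{1/4,3/2}\subset\Omega\subset\Sigma\cap B_2$, whose boundary coincides with $\partial\Sigma$ on $B_{3/2}\setminus B_{1/4}$, taking boundary data given by $v$ on the added spherical caps. The approximate solutions $v_{\e,k}$ are smooth up to the lateral boundary by classical theory.

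For these we use the pointwise identity
\[
\mathrm{div}\big(a_\e(|\nabla w|)^2\,[\nabla^2 w\,\nabla w-\Delta w\,\nabla w]\big)=\mathrm{tr}\big([\nabla\A_\e(\nabla w)]^2\big)-\big(\mathrm{div}\A_\e(\nabla w)\big)^2
\]
up to lower order terms. We multiply it by $\eta^2$, with $\eta$ a cutoff equal to $1$ on $B_1\setminus B_{1/2}$ and supported in $B_{3/2}\setminus B_{1/4}$, and integrate. On the left, \eqref{elm:trace} gives $c\int\eta^2|\nabla\A_\e(\nabla w)|^2$. The term with $\mathrm{div}\A_\e=-f$ gives $\int\eta^2 f^2$. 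The cutoff terms are bounded by $\int|\nabla\eta|\,\eta\,|\A_\e(\nabla w)||\nabla\A_\e(\nabla w)|$, which we absorb via Young's inequality.

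The boundary term is handled with the standard Grisvard-type identity. Under either $\partial_\nu w=0$ or $w=0$, the integrand $a_\e^2(\nabla^2w\nabla w-\Delta w\nabla w)\cdot\nu$ reduces to a curvature term, bounded by $|\mathcal{B}|\,|\A_\e(\nabla w)|^2$ on $\partial\Sigma$. Since $\mathcal{B}\in L^\infty$ there, the trace inequality (applied to $\eta|\A_\e(\nabla w)|$ in the localized Lipschitz domain) bounds it by $\delta\int\eta^2|\nabla\A_\e|^2+C_\delta\int_{\Sigma_{1/4,3/2}}|\A_\e(\nabla w)|^2$.

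This leaves an $L^2$ norm of $\A_\e(\nabla w)$, which we reduce to the $L^1$ norm appearing in \eqref{est:annuliH1} by an interpolation/iteration on nested annuli. We would use Lemma \ref{lem:sobannuli} in the interior and its boundary counterpart Lemma \ref{lemm:bdr:sobannuli}, with $\epsilon$ small and radii $1/2\le\sigma<\tau\le 1$ in a covering. This yields an inequality of the form $\Phi(\sigma)\le\tfrac12\Phi(\tau)+C(\tau-\sigma)^{-\gamma}(\|\A\|_{L^1}^2+\|f\|_{L^2}^2)$, and the standard iteration lemma removes the $\tfrac12\Phi(\tau)$ term.

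Finally, we pass to the limit as $\e\to0$ and $k\to\infty$. Uniform $W^{1,p}$ bounds follow from testing with the solution and using $f\in L^{(p^*)'}$, and $\nabla v_{\e,k}\to\nabla v$ a.e. by the usual monotonicity argument. Lower semicontinuity then gives $\A(\nabla v)\in W^{1,2}(\Sigma_{1/2,1})$ together with \eqref{est:annuliH1}.

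The main obstacle is the boundary term: it requires a careful treatment of the curvature of $\partial\Sigma$ in both the Dirichlet and the Neumann setting, together with the trace inequality on the Lipschitz annular sector. This is where \cite[Eq. (3.53)]{ACCFM25} and \cite{cia} would be invoked rather than reproved.
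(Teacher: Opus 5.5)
Your proposal is correct and follows essentially the same route as the paper. It uses regularization by $\A_\e$ and $f_k$, and the second-order inequality from \cite{ACCFM25} (your divergence identity is in fact exact, with no lower-order terms), with the curvature boundary term controlled through a trace inequality. It then reduces the $L^2$ norm of the stress field to its $L^1$ norm via a Sobolev--Poincar\'e inequality and the iteration lemma, and passes to the limit through a.e.\ convergence of gradients.

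The only real difference is the localization. Since $\mathcal{B}$ is bounded on $\partial\Sigma_{1/4,3/2}$, your single global cutoff with a trace inequality carrying a zero-order term is legitimate. It avoids the paper's small boundary-adapted balls (with $h(R_0)$ small), the hole-filling and the covering. The price is that the resulting $L^2$ term lives on the whole support of the cutoff, not on a thin annulus. So the tool you actually need is a plain interpolation inequality $\int_U g^2\le \epsilon\int_U|\nabla g|^2+C\epsilon^{-N/2}\big(\int_U|g|\big)^2$ on uniformly Lipschitz annular sectors $U$, rather than the thin-annulus Lemmas \ref{lem:sobannuli}--\ref{lemm:bdr:sobannuli}.
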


\begin{proof}
    The result is nothing else than a localization of the results in \cite{cia, ACCFM25}. For the sake of completeness, we provide the details of the proof.
    \\
\noindent\textit{Step 1: regularization.}
 Let $f_k\in C^\infty_c(\R^N)$ be a regularization of $f$ (say, via convolution) such that
    \begin{equation}\label{conv:fk}
        f_k\xrightarrow{k\to\infty }f\quad\text{in $L^2\cap L^{(p^*)'}(\Sigma\cap B_{7/4}\big)$.}
    \end{equation}
Next, recalling \eqref{regul:stress}, we consider $v_{\e,k}\in W^{1,p}( \Sigma_{1/4,3/2})$ the unique weak solution to 
\begin{equation}\label{eq:vekneu}
    \begin{cases}
        -\mathrm{div}\big(\A_\e(\nabla v_{\e,k}) \big)=f_k\quad &\text{in $\Sigma_{1/4,\,3/2}$}
        \\
        \partial_\nu v_{\e,k}=0\quad &\text{on $\partial\Sigma_{1/4,\,3/2}$}
        \\
        v_{\e,k}=v \quad &\text{on $\Sigma\cap \partial (B_{3/2}\setminus B_{1/4})$.}
    \end{cases}
\end{equation}
 if $v$ solves the Neumann problem \eqref{eq:bdryNeu}, or the weak solution to

\begin{equation}\label{eq:vekdir}
    \begin{cases}
        -\mathrm{div}\big(\A_\e(\nabla v_{\e,k}) \big)=f_k\quad &\text{in $\Sigma_{1/4,\,3/2}$}
        \\
         v_{\e,k}=0\quad &\text{on $\partial\Sigma_{1/4,\,3/2}$}
        \\
        v_{\e,k}=v \quad &\text{on $\Sigma\cap \partial (B_{3/2}\setminus B_{1/4})$,}
    \end{cases}
\end{equation}
if $v$ solves to the Dirichlet problem \eqref{eq:bdryDir}.

 Let us start by deriving an energy estimate for $v_{\e,k}$; by testing \eqref{eq:vekneu}-\eqref{eq:vekdir} with $v_{\e,k}-v$, we get (omitting the set of integration $\Sigma_{1/4,\,3/2}$)
\begin{equation}\label{eps:noia}
\begin{split}
\int\Big\{ &\big[\e^2+|\nabla v_{\e,k}|^2\big]^{\frac{p-2}{2}}\,|\nabla v_{\e,k}|^2-\big[\e^2+|\nabla v_{\e,k}|^2\big]^{\frac{p-2}{2}}\,|\nabla v_{\e,k}|\,|\nabla v|\Big\}\,dx
\\
&\leq \int \A_\e(\nabla v_{\e,k})\cdot \nabla (v_{\e,k}-v)\,dx=\int f_k\,(v_{\e,k}-v)\,dx
\\
&\leq \|f\|_{L^{(p^*)'}}\,\Big(\int |v_{\e,k}-v|^{p^*}\,dx\Big)^{1/(p^*)}
\\
&\leq C_1\,\|f\|_{L^{(p^*)'}}\,\Big(\int |\nabla (v_{\e,k}-v)|^p\,dx\Big)^{1/p}
\\
&\leq C_2\,\gamma^{-p'}\,\|f\|^{p'}_{L^{(p^*)'}}+\gamma^p\,\Big(\int |\nabla v_{\e,k}|^p+|\nabla v|^p\,dx\Big)
\end{split}
\end{equation}
for all $\gamma\in (0,1)$, with $C_1,C_2$ depending on $N,p,\delta, \mathrm{diam}(D), \mathcal{L}_D$. Above, in the last three estimates we used H\"older's, Young's, \eqref{conv:fk} and Sobolev inequality, whose quantitative constant depends on the Lipschitz characteristics $\mathcal{L}_D$ in view of Remark \ref{remark:lipconst}.

We now recall the following estimate
\[
\big[\e^2+|\nabla v_{\e,k}|^2\big]^{\frac{p-2}{2}}\,|\nabla v_{\e,k}|\,|\nabla v|\leq \frac{1}{2}\big[\e^2+|\nabla v_{\e,k}|^2\big]^{\frac{p-2}{2}}\,|\nabla v_{\e,k}|^2+C(p)\,\big[\e^2+|\nabla v|^2\big]^{\frac{p-2}{2}}\,|\nabla v|^2,
\]
which is a consequence of Young's inequality \cite[Eq. (2.26)]{A26} applied with $a_\e(t),b_\e(t),B_\e(t)$ as in \eqref{def:aebeBe}. The constant $C(p)$ is independent of $\e$ thanks to \eqref{unif:ineps}.

Combining the above displayed equation with \eqref{eps:noia}, we get
\begin{equation}\label{eps:noia1}
\begin{split}
    \int \big[\e^2+|\nabla v_{\e,k}|^2\big]^{\frac{p-2}{2}}\,|\nabla v_{\e,k}|^2\,dx\leq C\,\gamma^{-p'}\,\|f\|_{L^{(p^*)'}}+\gamma^p\,\int |\nabla v_{\e,k}|^p+C\,\int\big(1+|\nabla v|^p\big)\,dx.
    \end{split}
\end{equation}
Moreover, in the case $p\geq 2$, we have $\big[\e^2+t^2\big]^{\frac{p-2}{2}}t^2\geq t^p$, whereas in the case  $p<2$ there holds
\[
\big[\e^2+t^2\big]^{\frac{p-2}{2}}t^2\geq
2^{\frac{p-2}{2}}\,t^p\quad \text{if }\,t\geq 1.
\]
Thus, in both cases we have the estimate 
\[
\int |\nabla v_{\e,k}|^p\,dx\leq \int 1\,dx+\int_{\{|\nabla v_{\e,k}|>1\}}\big[\e^2+|\nabla v_{\e,k}|^2\big]^{\frac{p-2}{2}}|\nabla v_{\e,k}|^2\,dx,
\]
which combined with \eqref{eps:noia1}, and via simple algebraic manipulation yields 
\begin{equation}\label{energy:vek}
    \int (1+|\nabla v_{\e,k}|^p)\,dx\leq C\,\|f\|_{L^{(p^*)'}}+C\,\int\big(1+|\nabla v|^p\big)\,dx,
\end{equation}
with constant $C>0$ independent of $\e,k$.
Then, by Poincar\'e inequality we have
\begin{equation*}
    \int |v_{\e,k}-v|^p\,dx\leq C\, \int |\nabla (v_{\e,k}-v)|^p\,dx,
\end{equation*}
 which together with \eqref{energy:vek} implies
\begin{equation}\label{en:vek1}
    \int (1+| v_{\e,k}|^p)\,dx\leq C\,\|f\|_{L^{(p^*)'}}+C\,\int\big(1+|\nabla v|^p\big)\,dx,
\end{equation}
with $C>0$ independent of $\e,k$.
 
Next, by regularity theory for $p$-Laplace problems \cite{A26} combined with \eqref{energy:vek}, \eqref{en:vek1}, we have that  $v_{\e,k}\in C^{1,\alpha}(\overline\Sigma_{t/4,\,3t/2})$ for every $t\in (0,1)$, and for some $\alpha_k\in (0,1)$ independent of $\e$, with quantitative estimate 
\begin{equation}\label{stima:tempC1ak}
     \|v_{\e,k}\|_{C^{1,\alpha_k}(\overline\Sigma_{t/4,\,3t/2})}\leq C_{k,t},
 \end{equation}
 with $C_{k,t}$ possibly depending on $k,t$, but independent of $\e$.
 
 Moreover, being $\nabla v_{\e,k}$ bounded, Equations \eqref{eq:vekneu}-\eqref{eq:vekdir} are uniformly elliptic, hence by standard elliptic regularity theory \cite[Theorem 9.19]{GT}, \cite[Section 4.7]{L13}  we deduce that 
 \[
 v_{\e,k}\in C^{2,\beta}\big(\overline\Sigma_{t/4,\,3t/2}\big),\quad \text{for every $t\in (0,1)$.}
 \]

\noindent \textit{Step 2: estimate for the regularized Stress field.}
Now, in the case of Dirichlet problems we  make use of \cite[Eq. (4.35)]{ACCFM25} with $H=\text{Euclidean norm}$,  $v=v_{\e,k}$,    $h=a_\e\big(|\nabla v_{\e,k}|\big)$, while in the case of Neumann problems we exploit  \cite[Eq. (8.15)]{ACCFM25} (see also \cite[Theorem 3.1.1.1]{gris}), and we couple the resulting equations with \eqref{elm:trace} so that, in both cases, we find 
\begin{equation}\label{ineq:fund}
\begin{split}
   \hat C(N,p)\,\int |\nabla \A_\e(\nabla v_{\e,k})|^2\,\phi^2\,dx
   \leq &\, \int |\mathrm{div}\big(\A_\e(\nabla v_{\e,k})\big)|^2\,\phi^2\,dx
\\
&+\int\mathrm{div}\big(\A_\e(\nabla v_{\e,k})\big)\,\A_\e(\nabla v_{\e,k})\cdot \nabla \phi^2\,dx
   \\
   &-\int \nabla \A_\e(\nabla v_{\e,k})\,\A_\e(\nabla v_{\e,k})\cdot \nabla \phi^2\,dx+ I_{\partial \Sigma}
   \end{split}
\end{equation}
for all $\phi\in C^\infty_c(B_{3/2}\setminus B_{1/4})$. Above, we set $I_{\partial \Sigma}=0$ if $\mathrm{supp}\,\phi\cap \partial\Sigma=\emptyset$, otherwise
\begin{equation*}
    I_{\partial \Sigma}:=\begin{cases}
       \int_{\partial \Sigma} |\A_\e(\nabla v_{\e,k})|^2\,\,\mathrm{tr}\mathcal{B}\,\phi^2\,d\mathcal{H}^{N-1}\quad&\text{in the case of Dirichlet problems \eqref{eq:bdryDir}}
       \\
       \\
       \int_{\partial \Sigma} \,\mathcal{B}\Big( \A_\e(\nabla v_{\e,k})_T,\A_\e(\nabla v_{\e,k})_T\Big)\,\phi^2\,d\mathcal{H}^{N-1}\quad&\text{in the case of Neumann problems \eqref{eq:bdryNeu}},
   \end{cases}
\end{equation*}
where $\mathcal{B}$ is the second fundamental form of $\partial \Sigma$, $\mathrm{tr}\mathcal{B}$ is the mean curvature, and  $\A_\e(\nabla v_{\e,k})_T$ is the tangential part (w.r.t. $\partial \Sigma$) of the vector $\A_\e(\nabla v_{\e,k})$.
By exploiting the trace inequality \cite[Proposition 6.2 \& Corollary 6.6]{ACCFM25}, for all $\phi\in C^\infty_c(B_R(x_0))$, with $x_0\in \partial \Sigma$, we have the estimate
\begin{equation}\label{est:termbord}
\begin{split}
    |I_{\partial \Sigma}|&\leq \int_{\partial \Sigma}|\A_\e(\nabla v_{\e,k})|^2\,|\mathcal{B}|\,\phi^2\,d\mathcal{H}^{N-1}\leq h(R)\,\int \big|\nabla [\A_\e(\nabla v_{\e,k})\,\phi]\big|^2\,dx
    \\
    &=h(R)\,\int |\nabla \A_\e(\nabla v_{\e,k})|^2\,\phi^2\,dx+h(R)\,\int | \A_\e(\nabla v_{\e,k})|^2\,|\nabla \phi|^2\,dx,
    \end{split}
\end{equation}
where, recalling Remark \ref{remark:lipconst}, we set 
\[
h(R):=
\begin{cases}
   C(N,\mathcal{L}_D)\,R\,\|\mathcal{B}\|_{L^\infty(\partial\Sigma_{1/4,\,3/2})}\quad &\text{if $N\geq 3$}
    \\
    \\C(N,\mathcal{L}_D)\,R\,\log(1+1/R)\,\|\mathcal{B}\|_{L^\infty(\partial\Sigma_{1/4,\,3/2})}\quad &\text{if $N=2$.}
\end{cases}
\]
Therefore, by fixing a radius $R_0=R_0(N,p,\mathcal{L}_D\,\|\partial D\|_{C^{1,1}})\in (0,1)$ \footnote{Here $\underline{C}=\underline{C}(N,L_{\Sigma_{1/4,3/2}})=\underline{C}(N,\mathcal{L}_D)$ is the  constant appearing in Lemma \ref{lemm:bdr:sobannuli}} such that
\begin{equation}\label{fix:R0}
    h(R_0)\leq \frac{\hat{C}(N,p)}{4},\quad  R_0\leq \frac{R_{\Sigma_{1/4,\,3/2}}}{4\underline{C}(1+L_{\Sigma_{1/4,\,3/2}})};
\end{equation}
then by combining \eqref{ineq:fund} -\eqref{est:termbord}, and using Equations \eqref{eq:vekneu}-\eqref{eq:vekdir}, we find
\begin{equation}\label{into}
    \begin{split}
   \frac{3\,\hat C(N,p)}{4}\,\int |\nabla \A_\e(\nabla v_{\e,k})|^2\,\phi^2\,dx
   \leq &\, \int |f_k|^2\,\phi^2\,dx+C(N)\,\int\big|\nabla \big(\A_\e(\nabla v_{\e,k})\big)\big|\,\big|\A_\e(\nabla v_{\e,k})\big|\,|\nabla \phi^2|\,dx
   \\
   &+C(N,p)\,\int | \A_\e(\nabla v_{\e,k})|^2\,|\nabla \phi|^2\,dx,
   \end{split}
\end{equation}
for all $\phi\in C^\infty_c(B_{3/2}\setminus B_{1/4})$ such that either $\mathrm{supp}\phi\cap \partial\Sigma=\emptyset$, or such that $\phi\in C^\infty_c(B_R(x_0))$, $x_0\in \partial \Sigma_{1/4,\,3/2}$, with  $R\leq R_0$. By weighted Young's inequality
\[
\begin{split}
C(N)\,\int\big|\nabla \big(\A_\e(\nabla v_{\e,k})&\big)\big|\,\big|\A_\e(\nabla v_{\e,k})\big|\,|\nabla \phi^2|\,dx
\\
&\leq \frac{1}{4}\int |\nabla \A_\e(\nabla v_{\e,k})|^2\,\phi^2\,dx+C(N)\,\int |\A_\e(\nabla v_{\e,k})|^2\,|\nabla\phi|^2\,dx,
\end{split}
\]
which inserted into \eqref{into} gives
\begin{equation}\label{start:covering}
    \begin{split}
         \int |\nabla \A_\e(\nabla v_{\e,k})|^2\,\phi^2\,dx
   \leq &\, C(N,p)\,\int |f_k|^2\,\phi^2\,dx+C(N,p)\,\int | \A_\e(\nabla v_{\e,k})|^2\,|\nabla \phi|^2\,dx.
    \end{split}
\end{equation}
Now let $x_0\in \partial \Sigma_{1/4,\,3/2}$, $R_0/2\leq s<r\leq R_0$, and consider a cut-off function $\phi_{r,s}\in C^\infty_c(\mathcal{B}_{r}(x_0))$, $\phi_{r,s}\equiv 1$ on $\mathcal{B}_{s}(x_0)$, and $|\nabla \phi_{r,s}|\leq C(N)/(r-s)$, where $\mathcal{B}_r(x_0)$ is defined by  \eqref{flatten:balls}.
Using \eqref{start:covering} with $\phi=\phi_{r,s}$, we get
\begin{equation}\label{annuli1}
\begin{split}
     \int_{\Sigma\cap \mathcal{B}_s(x_0)} &|\nabla \A_\e(\nabla v_{\e,k})|^2\,dx
   \\
   &\leq \, C(N,p)\,\int_{\Sigma\cap \mathcal{B}_{R_0}(x_0)} |f_k|^2\,dx+\frac{C(N,p)}{(r-s)^2}\,\int_{\Sigma\cap \big(\mathcal{B}_r(x_0)\setminus \mathcal{B}_s(x_0)\big)} | \A_\e(\nabla v_{\e,k})|^2\,dx,
   \end{split}
\end{equation}
while by Lemma \ref{lemm:bdr:sobannuli} with $R=R_0$ and $\epsilon=(r-s)\,R_0^{-1}$, we get 
\begin{equation}\label{annuli2}
\begin{split}
   & \frac{C(N,p)}{(r-s)^2}\int_{\Sigma\cap \big(\mathcal{B}_r(x_0)\setminus \mathcal{B}_s(x_0)\big)} | \A_\e(\nabla v_{\e,k})|^2\,dx
    \\
    &\leq C_1\,\int_{\Sigma\cap \big(\mathcal{B}_r(x_0)\setminus \mathcal{B}_s(x_0)\big)} |\nabla \A_\e(\nabla v_{\e,k})|^2\,dx+\frac{C_1\,R_0}{(r-s)^{N+3}}\bigg(\int_{\Sigma\cap \big(\mathcal{B}_r(x_0)\setminus \mathcal{B}_s(x_0)\big)} | \A_\e(\nabla v_{\e,k})|\,dx \bigg)^2
    \end{split}
\end{equation}
with $C_1=C_1(N,p,\mathcal{L}_D)$. Coupling \eqref{annuli1}-\eqref{annuli2} tells that
\begin{equation*}
    \begin{split}
         \int_{\Sigma\cap \mathcal{B}_s(x_0)} &|\nabla \A_\e(\nabla v_{\e,k})|^2\,dx
   \\
   \leq&\, C_1\,\int_{\Sigma\cap \big(\mathcal{B}_r(x_0)\setminus \mathcal{B}_s(x_0)\big)} |\nabla \A_\e(\nabla v_{\e,k})|^2\,dx+ C(N,p)\,\int_{\Sigma\cap \mathcal{B}_{R_0}(x_0)} |f_k|^2\,dx
   \\
   &+\frac{C_1\,R_0}{(r-s)^{N+3}}\bigg(\int_{\Sigma\cap \big(\mathcal{B}_r(x_0)\setminus \mathcal{B}_s(x_0)\big)} | \A_\e(\nabla v_{\e,k})|\,dx \bigg)^2,
    \end{split}
\end{equation*}
so adding the quantity $C_1\int_{\Sigma\cap \mathcal{B}_s(x_0)} |\nabla \A_\e(\nabla v_{\e,k})|^2\,dx $  to both sides of the equation above gives
\begin{equation}\label{annuli3}
        \begin{split}
         \int_{\Sigma\cap \mathcal{B}_s(x_0)} &|\nabla \A_\e(\nabla v_{\e,k})|^2\,dx
   \\
   \leq&\, \frac{C_1}{1+C_1}\,\int_{\Sigma\cap \mathcal{B}_r(x_0)} |\nabla \A_\e(\nabla v_{\e,k})|^2\,dx+ C(N,p)\,\int_{\Sigma\cap \mathcal{B}_{R_0}(x_0)} |f_k|^2\,dx
   \\
   &+\frac{C_1\,R_0}{(r-s)^{N+3}}\bigg(\int_{\Sigma\cap \big(\mathcal{B}_{R_0}(x_0)\setminus \mathcal{B}_{R_0/2}(x_0)\big)} | \A_\e(\nabla v_{\e,k})|\,dx \bigg)^2,
    \end{split}
\end{equation}
for all $R_0/2<s<r<R_0$. The iteration lemma \cite[Lemma 6.1, pp. 191]{G03} allows us to deduce
\begin{equation}\label{mid:covering}
            \begin{split}
         \int_{\Sigma\cap \mathcal{B}_{R_0/2}(x_0)} &|\nabla \A_\e(\nabla v_{\e,k})|^2\,dx
   \\
   \leq&\, C_2\,\int_{\Sigma\cap \mathcal{B}_{R_0}(x_0)} |f_k|^2\,dx+\frac{C_2\,}{R_0^{N+2}}\bigg(\int_{\Sigma\cap \big(\mathcal{B}_{R_0}(x_0)\setminus \mathcal{B}_{R_0/2}(x_0)\big)} | \A_\e(\nabla v_{\e,k})|\,dx \bigg)^2,
    \end{split}
\end{equation}
with $C_2$ depending on the same quantities as $C_1$. Equation \eqref{mid:covering} can be obtained also for balls $B_{R_0}(x_0)\Subset \Sigma_{1/4,\,3/2}$, repeating the very same argument from \eqref{annuli1}-\eqref{mid:covering}, using Lemma \ref{lem:sobannuli}. Alternatively, one can appeal to the interior regularity result of \cite[Theorem 2.1]{cia} or \cite[Theorem 2.1]{ACCFM25}.

Then, recalling the dependency on the data of $R_0$ in \eqref{fix:R0}, Equation \eqref{mid:covering} and a standard covering argument allow one to get the desired estimate on the regularized stress field, namely
\begin{equation}\label{estimate:forregularized}
        \begin{split}
         \int_{\Sigma_{1/2,\,1}} &|\nabla \A_\e(\nabla v_{\e,k})|^2\,dx
   \\
   \leq&\, C_3\,\int_{\Sigma_{1/4,\,3/2}} |f_k|^2\,dx+C_3\,\bigg(\int_{\Sigma_{3/8,5/4}} | \A_\e(\nabla v_{\e,k})|\,dx \bigg)^2,
    \end{split}
\end{equation}
with $C_3$ depending on the same quantities as $C_1,C_2$.
\\

\noindent\textit{Step 3. Passing to the limit $\e\to 0$.} Owing to \eqref{energy:vek}-\eqref{en:vek1} and \eqref{stima:tempC1ak}, we may extract a subsequence, still labeled as $v_{\e,k}$ such that, for all $k\in \N$,
\begin{equation}\label{first:convergence}
    \begin{split}v_{\e,k}&\xrightarrow{\e\to 0^+} w_k
    \\
    &\quad\text{weakly in $W^{1,p}(\Sigma_{1/4,\,3/2})$, and strongly in $C^1\big(\overline{\Sigma}_{t/4,\,3t/2}\big)$, for all $t\in (0,1)$,}
    \end{split}
\end{equation}
for some function $w_k$. Thereby passing to the limit in \eqref{energy:vek}-\eqref{en:vek1}, we deduce
\begin{equation}\label{wk:stimaen}
    \|w_k\|_{W^{1,p}(\Sigma_{1/4,\,3/2})}\leq C\,\big(1+\|f\|_{L^{(p^*)'}(\Sigma_{1/4,\,3/2})}\big)^{1/p}+C\,\|v\|_{W^{1,p}(\Sigma_{1/4,\,3/2})},
\end{equation}
with constant $C>0$ independent of $k$.

Then, from \eqref{first:convergence}, and since $\A_\e(\xi)\xrightarrow{\e\to 0^+}\A(\xi)$ locally uniformly in $\R^N$,  we get
\begin{equation}\label{conv:Aewk}
    \A_\e(\nabla v_{\e,k})\xrightarrow{\e\to 0^+} \A(\nabla w_k)\quad\text{uniformly in $\overline{\Sigma}_{t/4,3t/2}$, for all $t\in (0,1)$,}
\end{equation}
while thanks to  \eqref{estimate:forregularized}, up to subsequences
\begin{equation*}
    \A_\e(\nabla v_{\e,k})\xrightarrow{\e\to 0^+}\A(\nabla w_k)\quad\text{weakly in $W^{1,2}(\Sigma_{1/2,1})$,}
\end{equation*}
with estimate
\begin{equation}\label{second:estimate}
            \begin{split}
         \int_{\Sigma_{1/2,\,1}} &|\nabla \A(\nabla w_{k})|^2\,dx
   \\
   \leq&\, C_3\,\int_{\Sigma_{1/4,\,3/2}} |f_k|^2\,dx+C_3\,\bigg(\int_{\Sigma_{3/8,5/4}} | \A(\nabla w_{k})|\,dx \bigg)^2.
    \end{split}
\end{equation}

Furthermore, testing the weak formulations of \eqref{eq:vekdir}-\eqref{eq:vekneu}, and using \eqref{first:convergence}, \eqref{conv:Aewk} and the continuity of the trace operator, by passing to the limit $\e\to 0^+$ it is immediate to see that $w_k$ is a weak solution to either
\begin{equation}\label{eq:wkneu}
    \begin{cases}
        -\mathrm{div}\big(\A(\nabla w_{k}) \big)=f_k\quad &\text{in $\Sigma_{1/4,\,3/2}$}
        \\
        \partial_\nu w_{k}=0\quad &\text{on $\partial\Sigma_{1/4,\,3/2}$}
        \\
        w_{k}=v \quad &\text{on $\Sigma\cap \partial (B_{3/2}\setminus B_{1/4})$,}
    \end{cases}
\end{equation}
when $v$ is solution to the Neumann problem \eqref{eq:bdryNeu}, or solution to
\begin{equation}\label{eq:wkdir}
    \begin{cases}
        -\mathrm{div}\big(\A(\nabla w_{k}) \big)=f_k\quad &\text{in $\Sigma_{1/4,\,3/2}$}
        \\
         w_{k}=0\quad &\text{on $\partial\Sigma_{1/4,\,3/2}$}
        \\
        w_{k}=v \quad &\text{on $\Sigma\cap \partial (B_{3/2}\setminus B_{1/4})$,}
    \end{cases}
\end{equation}
if $v$ is solution to the Dirichlet problem \eqref{eq:bdryDir}.
\\

\textit{Step 4. Conclusion.} We are left to pass to the limit as $k\to \infty$. To this end, we observe that, from the energy estimates \eqref{wk:stimaen}, \eqref{second:estimate} and \eqref{conv:fk}, we have (up to nonrelabeled subsequences)
\begin{equation}\label{weak:quasifinish}
    \begin{split}
        &w_k\to w\quad\text{weakly in $W^{1,p}(\Sigma_{1/4,3/2})$}
        \\
        &\A(\nabla w_k)\to \bar\A\quad \text{a.e. and weakly in $W^{1,2}(\Sigma_{1/2,1})$,}
    \end{split}
\end{equation}
for some function $w$ and a vector function $\bar{\A}$.

We now claim that
\begin{equation}\label{puntuale:nwk}
    \nabla w_k(x)\to\nabla w(x)\quad\text{for a.e. $x\in \Sigma_{1/4,3/2}$.}
\end{equation}
Exploiting either \eqref{eq:wkneu} or \eqref{eq:wkdir}, the proof of \eqref{puntuale:nwk} is exactly the same as \cite[Eq. (4.58), pp. 128-129]{CM11} (see also \cite[proof of Eq. (6.82)]{CM14}, \cite[pp. 28-29]{AC251}, \cite[pp. 28-29]{ACP25}). The only difference is that we have a different integrability assumption on $f_k$, so we just need to modify \cite[Eq. (4.61)]{CM11}. Specifically, we set
\[
\vartheta=\inf\{[\A(\xi)-\A(\eta)]\cdot [\xi-\eta]:\,|\xi-\eta|>t,|\xi|\leq \tau,\,|\eta|\leq \tau  \},
\]
which is positive by the coercivity properties of $\A(\xi)$; for $k,l\in \N$, we make use of $w_k-w_l$ as a test function in the weak formulation of either \eqref{eq:wkneu}-\eqref{eq:wkdir}-and of its analogue for $w_l$, and subtracting the resulting equations yields
\begin{equation*}\label{modify}
\begin{split}
        \vartheta & \,|\{|\nabla w_k-\nabla w_l|>t,|\nabla w_k|\leq \tau,\,|\nabla w_l|\leq \tau \}|\leq \int[\A(\nabla w_k)-\A(\nabla w_l)]\cdot [\nabla w_k-\nabla w_l]\,dx
    \\
    &=\int (f_k-f_l)\,(w_k-w_l)\,dx\leq \|f_k-f_l\|_{L^{(p^*)'}}\,\|w_k-w_l\|_{L^{p^*}}
    \\
    &\leq C\,\|f_k-f_l\|_{L^{(p^*)'}}\,\|w_k-w_l\|_{W^{1,p}}\leq C'\,\|f_k-f_l\|_{L^{(p^*)'}}
    \end{split}
\end{equation*}
where in the last two estimates we used H\"older's, Sobolev inequality and \eqref{wk:stimaen} (above, all the integrals are evaluated at $\Sigma_{1/4,3/2}$). The last term above goes to zero as $k,l\to\infty$ by \eqref{conv:fk}. The remainder of the argument in \cite{CM11}, showing that \(\{\nabla w_k\}_{k\in\N}\) is a Cauchy sequence in measure, then applies verbatim, and we therefore omit it. In particular, up to nonrelabeled subsequences, this yields the convergence in \eqref{puntuale:nwk}, which also implies that  $\bar \A=\A(\nabla w)$.

Moreover, by letting $k\to\infty$ in the weak formulation of \eqref{eq:wkdir}-\eqref{eq:wkneu}, and exploiting \eqref{weak:quasifinish}-\eqref{puntuale:nwk} we infer that $w$ is a weak solution to either
\begin{equation*}
    \begin{cases}
        -\mathrm{div}\big(\A(\nabla w) \big)=f\quad &\text{in $\Sigma_{1/4,\,3/2}$}
        \\
         \partial_\nu w=0\quad\text{or}\quad w=0\quad &\text{on $\partial\Sigma_{1/4,\,3/2}$}
        \\
        w=v \quad &\text{on $\Sigma\cap \partial (B_{3/2}\setminus B_{1/4})$,}
    \end{cases}
\end{equation*}
according to whether  $v$ is solution to the Dirichlet  or Neumann problem, respectively. By uniqueness, this implies that $w=v$. Thereby using \eqref{weak:quasifinish} and the lower semicontinuity of the norm, letting $k\to\infty$ in  \eqref{second:estimate} gives \eqref{est:annuliH1}, that is our thesis.
\end{proof}

We are now in the position to prove Theorem \ref{thm:weightH1}.
\begin{proof}[Proof of Theorem \ref{thm:weightH1}]
    The interior $W^{1,2}$-regularity of $\A(\nabla u)$ is the content of \cite{ACCFM25,CM11}. We now apply a scaling procedure as in the proof of Theorem \ref{thm:nonlingrad}. So let $u_0, f_0$ be given by \eqref{def:u0}, with 
\begin{equation}\label{new:fnorm}
\|f\|:=\|f\|_{L^{\max\{N/p,1\}+\delta}(\Sigma\cap B_2)}+\|f\|_{L^2(\Sigma\cap B_2;\,|x|^{2p}dx)}.
\end{equation}

    Then for $R\in (0,1]$, let $v_R,f_R$ be the functions defined in \eqref{de:vR}, so that $v_R$  is solution to \eqref{eq:vR}, with the same boundary condition as $u$. Noticing that $f\in L^{(p^*)'}$ as $\max\{N/p,1\}+\delta>(p^*)'$, we may appeal to Lemma \ref{lemma:H1annuli} and find
    \begin{equation*}
        \int_{\Sigma_{1/2,1}}|\nabla \A(\nabla v_R)|^2\,dx\leq C\,\bigg(\int_{\Sigma_{1/4,3/2}}| \A(\nabla v_R)|\,dx\bigg)^2+C\,\int_{\Sigma_{1/4,3/2}}|f_R|^2\,dx,
    \end{equation*}
with $C>0$ depending on $N,p,\mathrm{diam}(D),\mathcal{L}_D,\|\partial D\|_{C^{1,1}}$, but independent of $R$.

Scaling back to $u_0$ and $f_0$ yields
\begin{equation*}
    \int_{\Sigma_{R/2,R}}R^{2p}\,|\nabla \A(\nabla u_0)|^2\,dx\leq C\,R^N\,\bigg(\mint_{\Sigma_{R/4,3R/2}}R^{p-1} |\nabla u_0|^{p-1}\,dx\bigg)^2+C\,\int_{\Sigma_{R/4,3R/2}}R^{2p}\,|f_0|^2\,dx.
\end{equation*}
    By H\"older's inequality and \eqref{tesi:energia}, we have
    \begin{equation*}
       \mint_{\Sigma_{R/4,3R/2}}R^{p-1} |\nabla u_0|^{p-1}\,dx\leq C\, \bigg(\mint_{\Sigma_{R/4,3R/2}}|x|^{p} |\nabla u_0|^{p}\,dx\bigg)^{1/p'}\leq C'(N,p,\delta, L_D).
    \end{equation*}
Merging the two estimates above gives
\begin{equation}\label{to:iterate}
    \int_{\Sigma_{R/2,R}}|x|^{2p}\,|\nabla \A(\nabla u_0)|^2\,dx\leq C\,R^N+C\,\int_{\Sigma_{R/4,3R/2}}|x|^{2p}\,|f_0|^2\,dx.
\end{equation}
In such estimate, we take $R=R_k=1/2^k$ for $k=0,1,\dots$, and summing over $k$ yields
\begin{equation}\label{iterared}
    \int_{\Sigma\cap B_1}|x|^{2p}\,|\nabla \A(\nabla u_0)|^2\,dx\leq C+C\,\int_{\Sigma\cap B_2}|x|^{2p}\,|f_0|^2\,dx,
\end{equation}
where we also used that
\begin{equation*}
    \sum_{k=0}^\infty \int_{\Sigma_{2^{-k-2},\,2^{-k+1}}}|x|^{2p}|f|^2\,dx\leq 3\,\int_{\Sigma\cap B_2} |x|^{2p}|f|^2\,dx,
\end{equation*}
since along the summation at most 3 annuli intersect.
Finally, rewriting \eqref{iterared} in terms of $u$ gives our thesis.
    
\end{proof}

\section{Proof of Theorem \ref{thm}}\label{sec:laplneu}

Let $D\subsetneq\mathbb S^{N-1}$ be a $C^2$ domain  of the unit sphere in $\mathbb R^N$, $N\geq 2$, and let $\Sigma_D$ be given by \eqref{def:cone}.
% $$
% \Sigma_D=\{x=r\theta:r>0,\theta\in D\}.
% $$
Consider the  Neumann eigenvalue problem 
\begin{equation}\label{eig_lap_N}
\begin{cases}
-\Delta_{\mathbb S^{N-1}}u=\lambda u & {\rm in\ }D\\
\partial_{\nu}u=0 & {\rm on\ }\partial D,
\end{cases}
\end{equation}
where $\Delta_{\mathbb S^{N-1}}$ is the Laplace-Beltrami operator on $\mathbb S^{N-1}$. It is well-known that problem \eqref{eig_lap_N} admits an increasing sequence of non-negative eigenvalues of finite multiplicity:
$$
0=\lambda_0<\lambda_1\leq\lambda_2\leq\cdots\leq\lambda_j\leq\cdots\nearrow+\infty.
$$
and a corresponding orthonormal basis $\{Y_j\}_{j=0}^{\infty}$ of $L^2(D)$ of eigenfunctions. 
We have that $Y_0={\rm const}$. 

We also consider the eigenvalue problem for the Laplacian on $D$ with Dirichlet boundary conditions:
\begin{equation}\label{eig_lap_D}
\begin{cases}
-\Delta_{\mathbb S^{N-1}}u=\lambda u\,, & {\rm in\ }D\,,\\
u=0\,, & {\rm on\ }\partial D.
\end{cases}
\end{equation}
It is well-known that problem \eqref{eig_lap_D} admits an increasing sequence of positive eigenvalues of finite multiplicity:
$$
0<\lambda_1<\lambda_2\leq\cdots\leq\lambda_j\leq\cdots\nearrow+\infty.
$$
and a corresponding orthonormal basis $\{Y_j\}_{j=1}^{\infty}$ of $L^2(D)$ of eigenfunctions.

\smallskip

In what follows, with abuse of notation, we will write $\lambda_j$ and $Y_j$ for both the Neumann and Dirichlet eigenvalues and eigenfunctions, and we will denote by $L_D$ the (negative) Laplacian on $D$ with Neumann or Dirichlet boundary conditions. Hence we will say that $\lambda_j$ and $Y_j$ are the eigenvalues and eigenfunctions of $L_D$ to indicate that they are the eigenvalues and eigenfunctions of problems \eqref{eig_lap_N} and \eqref{eig_lap_D}.

\smallskip

For any $j\geq 1$ we denote by $\xi_j$ and $\xi_{-j}$ the positive and negative roots of $\xi(\xi+N-2)=\lambda_j(D)$, respectively:
$$
\xi_j=\frac{-(N-2)+\sqrt{4\lambda_j(D)+(N-2)^2}}{2}\ \ \ \ \ \ \xi_{-j}=\frac{-(N-2)-\sqrt{4\lambda_j(D)+(N-2)^2}}{2}.
$$
 For $j=0$, we employ the notation $\xi_0$ and $\xi_{-0}$, where we understand that, for $N=2$ $\xi_0=\xi_{-0}=0$, while for $N\geq 3$ $\xi_0=0$ and $\xi_{-0}=2-N$.

\medskip

The proof of Theorem \ref{thm} will be an adaptation of \cite[Theorem 6.4]{dauge_cones} (see also \cite[Chapters 5-6]{KozlovMazya}) and a localization argument. 

Specifically, we first study the asymptotics of compactly supported weak solutions to the Neumann problem in the infinite cone
\begin{equation}\label{Neu:tuttosigma}
    \begin{cases}
        -\Delta u=f\quad &\text{in $\Sigma_D$}
        \\
        \partial_\nu u=0\quad&\text{on $\partial \Sigma_D$}
    \end{cases}
\end{equation}
and of solution to the Dirichlet problem in the infinite cone
\begin{equation}\label{Dir:tuttosigma}
    \begin{cases}
        -\Delta u=f\quad &\text{in $\Sigma_D$}
        \\
         u=0\quad&\text{on $\partial \Sigma_D$.}
    \end{cases}
\end{equation}

This is the content of the following

\begin{theorem}\label{Dauge}
   Let $D\subsetneq \mathbb{S}^{N-1}$ be a domain of class $C^2$ such that  $\Sigma_D $ is a Lipschitz cone. Let $q\ge\frac{2N}{N+2}$ ($q>1$ if $N=2$)  be such that, upon setting $\eta=\eta_q=2-\frac{N}{q}$, $\eta(\eta+N-2)$ is not an eigenvalue of $L_D$.
      Let $u\in H^1(\Sigma_D)$ be a compactly supported solution to \eqref{Neu:tuttosigma} (resp \eqref{Dir:tuttosigma}) with $f\in L^q(\Sigma_D)$ and compactly supported. Then $u$ admits the finite expansion
    \begin{equation}\label{finite:exp}
    u=u_0+\sum_{\substack{i\geq 0\\\xi_i<\eta}}c_iP_i \quad\bigg(\text{resp. }  u=u_0+\sum_{{\substack{i\geq 1\\\xi_i<\eta}}}c_iP_i\bigg).
    \end{equation}
    where $c_i\in\R$, $\nabla^2u_0\in L^{q}(\Sigma_D)$, $|x|^{-1}\nabla u_0\in L^q(\Sigma_D)$ and $|x|^{-2}u_0\in L^q(\Sigma_D)$, $P_i$ is a harmonic function on $\Sigma_D$ of the form
    $$ P_i=r^{\xi_i}\mathfrak{a}_i(\theta),\text{ for some }\mathfrak{a}_i\in H^1(D), (\text{resp.} H_0^1(D)), $$
$P_i$ satisfies the same boundary conditions as $u$, and the sum in \eqref{finite:exp} may be empty.
\end{theorem}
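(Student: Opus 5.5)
We follow the Kondrat'ev--Dauge scheme and work in logarithmic polar coordinates. Writing $x=r\theta$ and $t=\log r$, the equation $-\Delta u=f$ on $\Sigma_D$ becomes
\[
-\big(\partial_t^2+(N-2)\partial_t\big)u+L_D u=r^2 f=:g
\]
on the infinite cylinder $\mathbb R\times D$, with the Neumann (resp. Dirichlet) condition on $\mathbb R\times\partial D$.

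The weighted Kondrat'ev space $V^{2,q}_\gamma(\Sigma_D)$ is defined by the norm $\sum_{|\alpha|\le 2}\||x|^{\gamma-2+|\alpha|}\nabla^\alpha u\|_{L^q}$. The conclusion on $u_0$ is exactly $u_0\in V^{2,q}_0$. Under $x\mapsto (t,\theta)$ this norm becomes an exponentially weighted norm $e^{-\beta t}W^{2,q}(\mathbb R\times D)$, with $\beta=\beta(\gamma,q,N)$. The normalization is chosen so that the critical line is $\operatorname{Re}\xi=\eta=2-N/q$. This is precisely the homogeneity threshold: $r^{\xi}$ lies in $V^{2,q}_0$ near $0$ iff $\operatorname{Re}\xi>2-N/q$.

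\emph{Step 1 (Fourier--Laplace in $t$).} Taking the Mellin transform in $r$ (Fourier in $t$) gives the operator pencil
\[
\mathfrak A(\xi)=L_D-\xi(\xi+N-2)
\]
acting $H^2(D)\to L^2(D)$ (and $W^{2,q}\to L^q$, with the boundary condition). It is invertible exactly when $\xi(\xi+N-2)\notin\operatorname{spec}(L_D)$. Its poles are therefore the $\xi_{\pm j}$ (real, semisimple, since $L_D$ is self-adjoint and $\xi\mapsto\xi(\xi+N-2)$ has simple roots, except the double root $\xi_0=\xi_{-0}=0$ when $N=2$). By hypothesis no pole lies on $\operatorname{Re}\xi=\eta$.

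On such a line we prove the parameter-dependent estimate
\[
\|v\|_{W^{2,q}(D)}+|\xi|^2\|v\|_{L^q(D)}\le C\|\mathfrak A(\xi)v\|_{L^q(D)}
\]
for $|\operatorname{Im}\xi|$ large, using Agmon--Agranovich--Vishik ellipticity with parameter for $C^2$ domains $D$. Combined with invertibility on the compact remaining part of the line, this gives the isomorphism
\[
-\Delta: V^{2,q}_0\to V^{0,q}_0=L^q
\]
(Neumann or Dirichlet), through an $L^q$ operator-valued Fourier multiplier argument (Mikhlin, or Dauge's Theorem~6.4 directly). This is the step we expect to be the main obstacle. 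We will not redo it; we invoke \cite[Theorem~6.4]{dauge_cones} or \cite[Ch.~6]{KozlovMazya} for $L^q$, checking only that the Neumann case with the $\lambda_0=0$ eigenvalue fits the hypotheses.

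\emph{Step 2 (two weights, residue theorem).} Since $u\in H^1$ is compactly supported, $u\in V^{2,2}_{\gamma'}$ for a weight $\gamma'$ corresponding to a line $\operatorname{Re}\xi=\eta'$ with $\eta'<\min(\eta,\xi_1)$, and $\eta'$ arbitrarily close to $1-N/2$ (interior $H^2$ regularity away from $0$ via the $C^2$ boundary, plus Hardy). Also $f\in L^q$ with compact support lies in the relevant dual weighted spaces for both lines. This uses $q\ge 2N/(N+2)$, i.e.\ $L^q\subset (H^1)'$ locally, which is needed for the $H^1$ solution to make sense.

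We represent $\hat u(\xi)=\mathfrak A(\xi)^{-1}\hat g(\xi)$ on $\operatorname{Re}\xi=\eta'$ and shift the contour to $\operatorname{Re}\xi=\eta$. Here $\hat g$ is holomorphic in the strip, by the compact support of $f$ and the integrability at $0$. The poles crossed are the $\xi_i$ with $\eta'<\xi_i<\eta$, and also $\xi_0=0$ in the Neumann case when $\eta>0$. These are finitely many by discreteness. Each simple pole contributes a residue
\[
c_i\, r^{\xi_i}\,\mathfrak a_i(\theta),\qquad \mathfrak a_i\in\ker(L_D-\lambda_i),
\]
a finite combination of eigenfunctions, hence a harmonic $P_i$ satisfying the boundary condition. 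The integral on the new line defines $u_0\in V^{2,q}_0$ by Step 1, which gives the weighted bounds on $\nabla^2u_0$, $|x|^{-1}\nabla u_0$ and $|x|^{-2}u_0$. The negative roots $\xi_{-j}$ are never crossed, because $u\in H^1$ excludes them: $\eta'>1-N/2>\xi_{-j}$ (the limiting case $N=2$, $\xi_{-0}=0$ is handled by the same $H^1$ constraint).

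In the Neumann case with $N=2$, the double pole at $0$ could produce a $\log r$ term. It is excluded because $\log r\notin H^1$ near $0$ in dimension two only marginally, so we check the residue directly: the coefficient of $\log r$ is the flux $\int f$ of a compactly supported solution over the whole cone. By the divergence theorem with Neumann data this flux vanishes, so only the constant $Y_0$ survives. Collecting the residues gives \eqref{finite:exp}, with the sum empty if no pole lies in the strip.
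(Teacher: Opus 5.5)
Your overall route is the paper's: pass to the cylinder $\R\times D$ via $t=\log|x|$, use that $L_D-\xi(\xi+N-2)$ is invertible away from the roots $\xi_{\pm j}$, compare the solutions at two weights and collect residues, and take the $W^{2,q}$ bound at weight $\eta$ from the fourth step of Dauge's Theorem 6.4. The paper does the comparison with Dauge's Lemmas 6.6, 6.8, 6.9, quoted as Properties (1)--(3). However, two claims in your Step 2 fail as written.

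First, $u\in V^{2,2}_{\gamma'}$ is false in general. The statement allows $q\in[\frac{2N}{N+2},2)$, and then $f\notin L^2_{\mathrm{loc}}$. So there is no $H^2$ regularity even away from the vertex, and $\hat g(\xi)$ need not lie in $L^2(D)$. Hence the pencil acting $H^2(D)\to L^2(D)$ is the wrong framework on the starting line. The starting point must be variational, as in the paper:
\begin{itemize}
\item $v\in\mathcal H^1_\zeta$ and $g\in\mathcal H^{-1}_\zeta$ for $\zeta<\eta$; this is where, as you note, $q\ge\frac{2N}{N+2}$ enters;
\item the isomorphism $A^\zeta:\mathcal H^1_\zeta\to\mathcal H^{-1}_\zeta$, i.e.\ the pencil acting $H^1(D)\to H^1(D)'$ (resp.\ $H^1_0(D)\to H^{-1}(D)$).
\end{itemize}
The residue computation is unchanged. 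Similarly, for $q\neq2$ there is no pointwise contour integral on $\operatorname{Re}\xi=\eta$ itself. The shift should stop at some $\zeta_2<\eta$ with no roots in $[\zeta_2,\eta]$, and $e^{-\eta t}w\in W^{2,q}(\R\times D)$ is then the regularity statement you import from Dauge.

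Second, $H^1$ plus Hardy places $v$ on the line $\operatorname{Re}\xi=1-N/2$ (for $N\ge3$, or Dirichlet with $N=2$), not strictly above it. Reaching some $\eta'>1-N/2$ requires a pole-free shift. For Neumann with $N=2$ your claim is false: the double root $\xi_0=\xi_{-0}=0$ sits exactly at $1-N/2$, and a function equal to a nonzero constant near the vertex lies in no $\mathcal H^1_\zeta$ with $\zeta>0$. Your own treatment of the double pole already presupposes a starting line below $0$.

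The paper avoids this by starting at $\zeta_1<1-N/2$, where $v\in\mathcal H^1_{\zeta_1}$ holds directly. It chooses $\zeta_1$ so close to $1-N/2$ that no $\xi_{-j}$ lies in $[\zeta_1,1-N/2)$. The only unwanted term that can then appear is $\log r$, for Neumann with $N=2$.

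Your flux argument for removing it is correct and is a genuine alternative to the paper's: the coefficient is a multiple of $\int_{\Sigma_D}f\,dx$, which vanishes on testing the Neumann problem with $\varphi\equiv1$ on the supports. The paper's $H^1$ argument, however, is not marginal. One has $\int_{\Sigma_D\cap B_1}|\nabla\log r|^2\,dx=|D|\int_0^1\frac{dr}{r}=\infty$. Meanwhile $u_0$ belongs to $H^1$ near the vertex by Sobolev, since $\nabla^2u_0,\,|x|^{-1}\nabla u_0\in L^q$ with $q>1$, and so does every $P_i$ with $\xi_i\ge0$.
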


\begin{proof}
 We introduce the new coordinates
   \begin{equation}\label{trasformazione}
    t:=\log|x|, \qquad \theta:=\frac{x}{|x|}\in D,\qquad x=\theta e^t
    \end{equation}
    and the following functions on $\R\times D$:
    \begin{equation}\label{def:vg}
    v(t,\theta)\coloneqq u(\theta e^t)=u(x), \qquad g(t,\theta) \coloneqq e^{2t}f(\theta e^t)=e^{2t}f(x).
    \end{equation}
It is known (see \cite[Sections 5.3, 6.1]{KozlovMazya}) that the change of variables \eqref{trasformazione} induces an isomorphism 
$$L^q(\Sigma_D)\to W_\zeta^{0,q}(\R\times D)\qquad\text{with }\zeta=-N/q
$$ 
where 
$$
W^{k,q}_\zeta (\R\times D) \coloneqq \{h:\,e^{-\zeta t}\,h\in W^{k,q}(\R\times D)\},\quad  k\in \N\cup \{0\}.
$$
We also set
$$
\mathcal{H}^1_\zeta:=\begin{cases}
    \{w\in W^{1,2}_\zeta(\R\times D)\}\quad&\text{in the case of Neumann problems}
    \\
    \{w\in W^{1,2}_\zeta(\R\times D):\,w=0 \text{ on }\R\times\partial D\} \quad&\text{in the case of Dirichlet problems,}
\end{cases}
$$
and 
$$
\mathcal{H}^{-1}_\zeta:=W^{-1,2}_\zeta(\R\times D)=(\mathcal{H}^1_{-\zeta})'.
$$
The functions $g,v$ satisfy the following properties:
\vspace{0.2cm}

\begin{property*}[1] (see \cite[Lemma 6.6]{dauge_cones})
  Let $\eta=\eta_q=2-\frac{N}{q}$, and $v,g$ be given by \eqref{def:vg}. Then
    $$
    \forall \zeta\le\eta, \qquad g\in W_\zeta^{0,q}(\R\times D)
    $$
  \begin{equation}
  \label{6.2}
    \forall \zeta<\eta, \qquad g\in \mathcal{H}_\zeta^{-1} 
  \end{equation}
    and 
   \begin{equation}\label{6.3}
    \forall \zeta<1-N/2, \qquad v\in \mathcal{H}_\zeta^{1}.
  \end{equation}
    Moreover $v$ is solution to
    \begin{equation}\label{6.4}
   - (\partial_t^2+(N-2)\partial_t+\Delta_{\mathbb S^{N-1}})v=g.
  \end{equation}
  \end{property*}
\vspace{0.2cm}
  
\begin{property*}[2](see \cite[Lemma 6.8]{dauge_cones})
        If $\zeta\in\R$ is such that
        $$
        \zeta(\zeta+N-2) \text{ is not an eigenvalue of }L_D
        $$
         then the operator $-(\partial_t^2+(N-2)\partial_t+\Delta_{\mathbb S^{N-1}})$ induces an isomorphism
        $$
        A^\zeta:\mathcal{H}^1_\zeta\to\mathcal{H}^{-1}_\zeta.
        $$
\end{property*}

\begin{property*}[3] (see \cite[Lemma 6.9]{dauge_cones})
    Let $\zeta_1,\zeta_2\in\R$, $\zeta_1<\zeta_2$ be such that
    $$
    \zeta_j\,(\zeta_j+N-2)\text{ is not an eigenvalue of }L_D\text{ for }j=1,2.
    $$
    For a given $h\in\mathcal{H}^{-1}_{\zeta_1}\cap\mathcal{H}^{-1}_{\zeta_2}$, let
    \begin{equation*}
        w_j\coloneqq (A^{\zeta_j})^{-1}h\quad\text{and}\quad u_j(x)=u_j(\theta\,e^t)\coloneqq  w_j(t,\theta).
    \end{equation*}
for $j=1,2$.   Then we have
   \begin{equation}\label{sum3}
    u_1-u_2=\sum_{\xi_i\in(\zeta_1,\zeta_2)} c_iP_i    
    \end{equation}
    where $c_i\in\R$, and $P_i$ fulfills
    $$
    \begin{cases}
        \Delta P_i=0\quad&\text{in $\Sigma_D$}\\
       
        \partial_\nu P_i=0 \quad(\text{resp. }  P_i=0)\quad&\text{on $\partial\Sigma_D$}\\
    \end{cases}
    $$
    and have the explicit form, for $i\in\mathbb Z$, $i\ne 0$:
    $$
P_i=r^{\xi_i}\,\mathfrak{a}_i(\theta)\text{ with }\mathfrak{a}_i\in H^1(D), (\text{resp. }H_0^1(D)). $$
In the Neumann case the sum \eqref{sum3} may contain $P_0$ and $P_{-0}$, where $P_0=1$ and $P_{-0}=r^{2-N}$ for  $N\geq 3$ and $P_{-0}=\log(r)$ for $N=2$.

In particular, if for every $ \xi\in(\zeta_1,\zeta_2),\, \xi(\xi+N-2)$ is not an eigenvalue of $L_D$, then  $w_1\equiv w_2$, hence $u_1\equiv u_2$.
\end{property*}
    % Property (3) is the analogue of  \cite[Lemma 6.9]{dauge_cones}, which in turn is derived from \cite{Kondratev}-- see also \cite{KozlovMazya}.
% For the sake of completeness, we provide further details of the proof.
\vspace{0.2cm}

\noindent \textbf{Conclusion.}
Let $g$ and $v$ be given by \eqref{def:vg}. Since by hypothesis $\eta(\eta+N-2)$ is not an eigenvalue of $L_D$, by the discreteness of the spectrum we may find $\zeta_2<\eta$ such that $\zeta(\zeta+N-2)$ is not an eigenvalue of $L_D$ for all $\zeta\in[\zeta_2,\eta]$. According to \eqref{6.2} and Properties (2)-(3), for $\zeta\in[\zeta_2,\eta)$ we may thus define the functions
\begin{equation*}
    w(t,\theta)\coloneqq (A^{\zeta})^{-1}g,\quad u_0(x)=u_0(\theta\,e^t)\coloneqq w(t,\theta).
\end{equation*}
% choose there exists $\zeta_0<\eta$ such that
% $$
% \forall\zeta\in[\zeta_0,\eta], \qquad \zeta(\zeta+N-2)\text{ is not an eigenvalue of }-\Delta_D.
% $$
% Combining \eqref{6.2} with Property (3), we conclude that, for every $\zeta\in[\zeta_0,\eta]$, the function $(A^\zeta)^{-1}g$ is independent of $\zeta$. We denote this common function by $w$. We then set
% $$u_0(x)=u_0(\theta e^t)\coloneqq w(t,\theta).$$
Let $\zeta_1<\min\{1-N/2,\zeta_2\}$ be such that for all $\zeta\in[\zeta_1,1-N/2)$, $\zeta(\zeta+N-2)$ is not an eigenvalue of $L_D$. Then by Property (2), \eqref{6.3} and \eqref{6.4} we have $(A^{\zeta_1})^{-1}g=v$.  We now employ Property (3) with  $\zeta_1,\zeta_2$ as above and, recalling \eqref{def:vg}, we infer

% We then fix $\zeta_1<\zeta_2$ such that $\zeta_1<1-N/2$, and $\zeta_1<\eta$, and make use of Property (3) as to obtain
$$
u=u_0+\sum_{\xi_i\in(\zeta_1,\zeta_2)}c_iP_i,
$$
Now, all $\xi_{-j}$, $j\geq 1$ lie below $1-N/2$ and hence, from our choice of $\zeta_1$, they lie below $\zeta_1$. In the Neumann case and $N\geq 3$ this includes also $\xi_{-0}=2-N$, hence only $\xi_i$, $i\geq 0$ can occur. Therefore the sum is over $i\geq 0$ in the Neumann case and $i\geq 1$ in the Dirichlet case, both subject to $\xi_i<\eta$. The sum may be empty. In the Neumann case and $N=2$, we have that $P_{-0}=\log(r)$ may enter the sum.  However, this cannot be the case since $u\in H^1(\Sigma_D)$.

Therefore, the proof will be concluded once we show that $\nabla^2 u_0\in L^q({\Sigma}_D)$, $ |x|^{-1}\nabla u_0\in L^q(\Sigma_D) $ and $ |x|^{-2}u_0\in L^q(\Sigma_D)$; by the change of variables \eqref{trasformazione}, it is immediate to see that this is equivalent to prove that
\begin{equation}\label{dauge_4}
e^{-\eta t}w\in W^{2,q}(\R\times D).
\end{equation}
Since $D$ is of class $C^2$, standard elliptic regularity holds in the cylinder $\mathbb R\times D$, hence \cite[Hypothesis 6.2]{dauge_cones} is satisfied. At this point,  \eqref{dauge_4} follows from the fourth step in the proof of \cite[Theorem 6.4]{dauge_cones}. 
\end{proof}

\begin{proof}[Proof of Theorem \ref{thm}]

Let $u$ be a weak solution to \eqref{eq:lapl:neu} (resp \eqref{eq:lapl:dir}). We first perform a localization argument to reduce to the setting of Theorem \ref{Dauge}. To this end, let $\phi$ be a radially symmetric function such that $\phi\in C^\infty_c(B_{3/2})$ and $\phi\equiv 1$ in $B_{5/4}$. Define $\tilde{u}=u\,\phi$, so that $\tilde{u}\in H^1(\Sigma_D)$ with support bounded in the radial direction  is solution to
\begin{equation}\label{solvesu}
    -\Delta \tilde{u}=f\,\phi-2\,\nabla u\cdot \nabla \phi-u\,\Delta\phi.
\end{equation}
Moreover, since $\partial \Sigma_D\setminus\{0\}$ is of class $C^2$, and $f\in L^q(\Sigma_D\cap B_2)$, by standard elliptic regularity theory \cite{GT, L13}, we have that $u\in W^{2,q}\big(\Sigma_D\cap (B_{3/2}\setminus B_{1})\big)$. As $\mathrm{supp}(\nabla\phi)\cup\mathrm{supp}(\nabla^2\phi)\Subset B_{3/2}\setminus B_1$, this in turn implies that the right-hand side of \eqref{solvesu} is of class $L^q(\Sigma_D)$ and is compactly supported.

Finally, if $u$ solves the Dirichlet problem \eqref{eq:lapl:dir}, then clearly $\tilde{u}=0$ on $\partial \Sigma_D$. On the other hand,  the radial symmetry of $\phi$ implies that $\partial_\nu \phi=0$ on $\partial \Sigma_D$; hence, if $u$ solves \eqref{eq:lapl:neu}, we have that $\partial_\nu \tilde{u}=(\partial_\nu u)\,\phi+u\,\partial_\nu \phi=0$ on $\partial\Sigma_D$. We conclude that $\tilde{u}$ satisfies the assumptions of Theorem \ref{Dauge}, with $\tilde{u}\equiv u$ on $\Sigma_D\cap B_1$. For this reason, and to simplify the notation, we continue to denote by $u$ the function $\tilde{u}$.
\vspace{0.1cm}

From Theorem \ref{Dauge} with Neumann (resp. Dirichlet) boundary conditions we obtain that
\begin{equation}\label{sum}
u=u_0+\sum_{\substack {i\geq 0\\\xi_i<\eta}} c_iP_i\quad\bigg(\text{resp. }  u=u_0+\sum_{\substack {i\geq 1\\\xi_i<\eta}}c_iP_i\bigg),
\end{equation}
where $c_i\in\R$, $\nabla^2u_0\in L^q(\Sigma_D)$, $|x|^{-1}\nabla u_0\in L^q(\Sigma_D)$ and $|x|^{-2}u_0\in L^q(\Sigma_D)$. The weighted estimates imply that $u_0\in W^{2,q}(\Sigma_D\cap B_1)$. 
Moreover $P_i$ are harmonic functions in $\Sigma_D$ with Neumann (resp. Dirichlet)  boundary conditions, of the form $P_i=r^{\xi_i}\,\mathfrak{a}_i(\theta)$, where we recall that, for $i\geq 0$ in the Neumann case and $i\geq 1$ in the Dirichlet case,
%By writing the Laplacian in polar coordinates, they necessarily satisfy
%$$
%\partial^2_{rr}P_i+(N-1)\frac{\partial_r P_i}{r}+\frac{\Delta_{\mathbb S^{N-1}} P_i}{r^2}=0
%$$
%which implies that $\mathfrak{a}_i$ solve
%$$
%-\Delta_D \mathfrak{a}_i(\theta)=\xi_i(\xi_i+N-2)\,\mathfrak{a}_i(\theta).
%$$
%This means that $\mathfrak{a}_i(\theta)$  is a Neumann (resp. Dirichlet) eigenfunction of $-\Delta_{\mathbb S^{N-1}}$, corresponding to the eigenvalue $\lambda_i=\xi_i(\xi_i+N-2)$, and thus
$$
\xi_i=\frac{-(N-2)+\sqrt{4\lambda_i(D)+(N-2)^2}}{2}.
$$
To complete the proof, it remains to analyze the regularity of the sum in \eqref{sum}. In view of the regularity of $u_0\in W^{2,q}(\Sigma_D\cap B_1)$, this is equivalent to studying the regularity of the functions $P_i=r^{\xi_i}\mathfrak{a}_i(\theta)$. Recall that $\mathfrak{a}_i$ belongs to the eigenspace associated with $\lambda_i(D)$, hence it is enough to study the regularity of $r^{\xi_i}Y_i(\theta)$.

First, note that if $\lambda_1(D)>N\left(2-\frac{N}{q}\right)\left(1-\frac{1}{q}\right)$, then $\xi_1>2-\frac{N}{q}=\eta$, hence  \eqref{sum} reduces to $u_0$ plus at most a constant in the Neumann case, and to $u_0$ in the Dirichlet case, and this proves Assertion (1).

%Since $Y_i$ are $C^2$ functions, differentiation with respect to the angular variables does not introduce any singular behavior. Also, derivatives of order \(k\in \N\), $k\leq 2$, satisfy
%\[
%|\nabla^k P_i(x)|\sim C r^{\xi_i-k},
%\]
%hence via a simple change of polar coordinates we deduce that $
%\nabla^2 P_i\in L^q(\Sigma_D\cap B_1) $ if and %only if $q\,(\xi_i-2)+N>0 $ (or equivalently,  $
%\xi_i>2-\frac{N}{q}$), then
%. This means
%$$
%\xi_1=\frac{-(N-2)+\sqrt{4\lambda_1+(N-2)^2}}%{2}>2-\frac{N}{q},
%$$
%but the sum \eqref{sum} ranges over $\xi_<\eta=2-N/q$, thus $u=u_0$ and we get Assertion (1). 
Now we prove points $(2),(3)$ and $(4)$.
Since $q>N$, from Stein extension Theorem applied to $u_0$ in $\Sigma_D\cap B_1$, and Sobolev embedding Theorem, we deduce $u_0\in C^{1,\beta}(\overline{\Sigma}_D\cap B_1)$, $\beta=1-\frac{N}{q}$. This property coupled with the fact that  $|x|^{-1}\nabla u_0,|x|^{-2}u_0\in L^q(\Sigma_D)$ yields that $u_0(0)=0$ and $\nabla u_0(0)=0$.

Assume now that $\lambda_1(D)=N-1$. Thus $0=\xi_0$,  $1=\xi_1\leq \xi_i$ for all $i\geq 1$,  and this implies  the Lipschitz continuity of $P_i=r^{\xi_i}\,Y_i(\theta)$  (but in general not the $C^1$ regularity due to the possible presence of terms of the form $rY_1(\theta)$, see Remark \ref{finalrmk}). Since $u_0\in C^{1,\beta}(\overline{\Sigma}_D\cap B_1)$, this implies
$u\in C^{0,1}(\overline{\Sigma}_D\cap B_1)$.

Finally, in the case $\lambda_1>N-1$ we have $\xi_1>1$; hence $\xi_0=0$ and $\xi_i>1$ for $i\geq 1$, so that each term in the summation of \eqref{sum}
is of class $C^{1,\xi_i-1}(\overline{\Sigma}_D\cap B_1)$ if also $\xi_i<2$, while it is of class at least $C^{1,\gamma}(\overline{\Sigma}_D\cap B_1)$ for all $\gamma<1$ if $\xi_i\geq 2$ (this is the regularity of $Y_i$ when $D$ is of class $C^2$). Therefore
$u\in C^{1,\alpha}(\overline{\Sigma}_D\cap B_1)$ with
$\alpha=\min\{1-N/q,\xi_1-1\}\in(0,1)$. Moreover, $\nabla P_i(0)=0$, which together with the fact that $\nabla u_0(0)=0$ yields
$\nabla u(0)=0$. This concludes the proof.
\end{proof}

 %\todo[inline]{Nel seguente Remark vengono usate le lettere $v,p$ per indicare oggetti diversi da quelli indicati fino ad ora. Io non penso che ci sia ambiguità, ma valutiamo.}

% \todo[inline]{CA: forse spostare questo remark nell'intro}
\begin{remark}\label{finalrmk}
    \rm{When $\lambda_1=N-1$, Theorem \ref{thm} states that $u$ is  Lipschitz continuous at the origin. This follows from the regularity of the terms $rY(\theta)$, where $Y(\theta)$ is an eigenfunction corresponding to the eigenvalue $\lambda_1=N-1$,  which appears in the decomposition \eqref{decomposition0} of $u$, since all the other ones have derivatives that extend continuously at the vertex (they actually vanish at the vertex). One may ask then in which cases the gradient of $u$ can be extended continuously at the vertex to get more regularity. This happens in extremely specific situations. In fact, $\nabla (rY(\theta))=Y(\theta)\theta+\nabla_DY(\theta)$. Hence it extends continuously at the vertex if and only if  $Y(\theta)\theta+\nabla_DY(\theta)=v$ for some fixed $v\in\mathbb R^N$. This implies, taking the scalar product with $\theta$, that $Y(\theta)=\langle v,\theta\rangle$, that is $rY(\theta)=\langle x,v\rangle$ is a linear function (recall $x=r\theta$ in spherical coordinates), which we know to be an eigenfunction on $\mathbb S^{N-1}$ with eigenvalue $N-1$. Moreover, $Y(\theta)$ must satisfy Dirichlet or Neumann conditions on $\partial D$. In other words, this happens if and only if the restriction of a linear function on $D$ satisfies Dirichlet or Neumann boundary conditions on $\partial D$. For the Dirichlet case, this happens if and only if $D$ is a hemisphere. In all the other cases $C^1$-regularity cannot be guaranteed for arbitrary solutions. For the Neumann case we have more domains. We have that on $\partial D$, $\partial_\nu \langle \theta,v\rangle=\langle\nu,v\rangle$, which means that the restriction of $v$ to $\mathbb S^2$ must be tangential to $\partial D$. Let $\pi_v$ be the hyperplane in $\mathbb R^N$ through $0$ orthogonal to $v$ and let $E_v=\pi_v\cap\mathbb S^{N-1}$ be the corresponding equator in $\mathbb S^{N-1}$. Let $N,S$ the opposite poles lying on the line through $0$ parallel to $v$. Let $p\in E_v$ and set $\gamma(p)$ the geodesic segment through $p$ connecting $N$ and $S$. If $\omega$ is a smooth open set of $E_v$, then $D=\cup_{p\in \omega}\gamma(p)$ is a domain of $\mathbb S^{N-1}$ (it is a ``double cone'' with vertices at the poles). Assuming it is Lipschitz, then the restriction of $\langle x,v\rangle$
 is a Neumann eigenfunction with eigenvalue $N-1$. These are all such domains. In the case of $\mathbb S^2\subset\mathbb R^3$ these are all the domains bounded by two meridians (i.e., they are hemispheres or lunes). If we require the domains to be smooth, the only possibility is that $D$ is a hemisphere.}
\end{remark}

%\todo[inline]{Aggiungete anche i vostri dettagli istituzionali}

\bigskip{}{}

 \par\noindent {\bf Acknowledgments.}  C.A.  Antonini is supported by the Italian Ministry of University and
Research (MUR) through the FIS 2 project "SiGmA - Singularities in Geometric Analysis: Minimal Surfaces
and Mean Curvature Flows", project code FIS-2023-02962, CUP G53C25000120001.

\bigskip{}{}

 \par\noindent {\bf Data availability statement.} Data sharing not applicable to this article as no datasets were generated or analyzed during the current study. 

\section*{Compliance with Ethical Standards}\label{conflicts}

\par\noindent
{\bf Funding}. Research partially funded by GNAMPA   of the Italian INdAM - National Institute of High Mathematics (grant number not available).
L.P. has been supported  by the project ``Analisi Geometrica e Teoria  Spettrale su variet\'a Riemanniane ed Hermitiane'' of the INdAM GNSAGA.

\bigskip
\par\noindent
{\bf Conflict of Interest}. The authors declare that there is no conflict of interest.

\end{document}